\newcommand{\qbin}[2]{\genfrac{[}{]}{0pt}{}{#1}{#2}_{q}}
\newcommand{\sfk}[2]{\genfrac{[}{]}{0pt}{}{#1}{#2}}
\newtheorem{theorem}{Theorem}[section]
\newtheorem{lemma}{Lemma}[section]
\newtheorem{corollary}{Corollary}[section]
\newtheorem{conjecture}{Conjecture}[section]
\newtheorem{proposition}{Proposition}[section]
\newtheorem{example}{Example}[section]
\newtheoremstyle{remark}
    {\dimexpr\topsep/2\relax} % space above
    {\dimexpr\topsep/2\relax} % space below
    {}          % body font
    {}          % indent amount
    {\bfseries} % theorem head font
    {.}         % punctuation after theorem head
    {.5em}      % space after theorem head
    {}          % theorem hed spec. (empty = "normal")
\theoremstyle{remark}
\newtheorem{remark}{Remark}[section]
\begin{document}

\title[]
{Extensions of MacMahon's sums of divisors}

\author{Tewodros Amdeberhan} 

\address{Department of Mathematics,
Tulane University, New Orleans, LA 70118, USA}
\email{tamdeber@tulane.edu}

\author{George E. Andrews}\thanks{The second author is partially supported by Simon Foundation Grant 633284.}

\address{Department of Mathematics, Penn State University, University Park, PA 16802, USA}
 \email{gea1@psu.edu} 

\author{Roberto Tauraso}

\address{Dipartimento di Matematica, % \\
Università di Roma ``Tor Vergata'', 00133 Roma, Italy}
\email{tauraso@mat.uniroma2.it}

\subjclass[2020]{Primary 11M32, 11P83; Secondary 11F37.}

\keywords{$q$-harmonic sums, generalized sums of divisors,  quasi-modular forms}

\begin{abstract} 
In 1920, P. A. MacMahon generalized the (classical) notion of divisor sums by relating it to the theory of partitions of integers. In this paper, we extend the idea of MacMahon. In doing so we reveal a wealth of divisibility theorems and unexpected combinatorial identities. Our initial approach is quite different from MacMahon and involves \emph{rational} function approximation to 
MacMahon-type generating functions. One such example involves multiple $q$-harmonic sums
\begin{equation*}
\sum_{k=1}^n\frac{(-1)^{k-1}\qbin{n}{k}(1+q^k)q^{\binom{k}2+tk}}{[k]_q^{2t}\qbin{n+k}{k}}
=\sum_{1\leq k_1\leq\cdots\leq k_{2t}\leq n}\frac{q^{n+k_1+k_3\cdots+k_{2t-1}}+q^{k_2+k_4+\cdots+k_{2t}}}{[n+k_1]_q[k_2]_q\cdots[k_{2t}]_q}.
\end{equation*}
\end{abstract} 

\maketitle

\section{Introduction}

\noindent
In his 1920 paper, \emph{Divisors of Numbers and their Continuations in the Theory of Partitions}, P. A. MacMahon \cite{MacMahon} links the theory of integer partitions to divisor sums as follows. He begins with the simple observation that the number of partitions of $n$ in which all parts are identical \emph{equals} to the number of divisors of $n$. From here, it is natural to look at partitions of $n$ in which there are exactly two different sizes of parts; then on to three sizes, etc.. 

\smallskip
\noindent
Indeed, if we want the generating functions for $t$ sizes, it is clearly
\begin{align} \label{Mac1}
\sum_{1\leq k_1<k_2<\cdots<k_t}\frac{q^{k_1+k_2+\cdots+k_t}}{(1-q^{k_1})(1-q^{k_2})\cdots(1-q^{k_t})}.
\end{align}

\noindent
Immediately, MacMahon's knowledge of elliptic functions comes into play. He recognized that now generalizing to sum of divisors along these lines should be of  substantial interest because of the connection to elliptic functions. Indeed, MacMahon proved that
\begin{align} \label{Mac2} \nonumber
U_t(q)=\sum_{n\geq0}MO(t,n)q^n:&=\sum_{1\leq k_1<k_2<\cdots<k_t}\frac{q^{k_1+k_2+\cdots+k_t}}{(1-q^{k_1})^2(1-q^{k_2})^2\cdots(1-q^{k_t})^2} \\ 
&=\frac{(-1)^t}{2^{2t}(2t+1)!}\frac1{\mathbf{J}_1}\,\mathbf{J}(\mathbf{J}^2-1^2)(\mathbf{J}^2-3^2)\cdots(\mathbf{J}^2-(2t-1)^2).
\end{align}
The above identity \eqref{Mac2} is to be interpreted umbrally, i.e. after the multiplication is carried out on the right-hand side each $\mathbf{J}^s$ is to be replaced by $\mathbf{J}_s$, and 
\begin{align} \label{Mac3}
\mathbf{J}_s=\sum_{m\geq0}(-1)^m(2m+1)^sq^{\binom{m+1}2}.
\end{align}
We observe that, while the evolution of MacMahon's project is still natural, one might also consider 
\begin{align} \label{AAT1}
V_t(q)=\sum_{n\geq0}M(t,n)q^n:&=\sum_{1\leq k_1\leq k_2\leq\cdots\leq k_t}\frac{q^{k_1+k_2+\cdots+k_t}}{(1-q^{k_1})^2(1-q^{k_2})^2\cdots(1-q^{k_t})^2}.
\end{align}
We note that when $t=1$ this again is the generating function $\sum_{n\geq 0}\sigma_1(n)q^n$ for the sum of the divisors of $n$.

\smallskip
\noindent
Now MacMahon treats extensively the relationship of $U_t(q)$ and analogous functions to elliptic functions, and one of his conjectures is settled in
\cite{Andrews-Rose}. However, in our paper here we reveal a rich arithmetic aspect of these generalized divisor functions that has been overlooked. For example, look at the the next congruences in arithmetic progression
\begin{align} \label{M2andMO2}
M(2,5n+1)\equiv MO(2,5n+1)\equiv 0 \pmod5.
\end{align}
Indeed many more such congruences occur for moduli $5$ and $7$ (cf. Theorem \ref{cong1}). 

\smallskip
\noindent
In addition to surprising congruences, we also obtained a novel identity among these generalized divisor sums. Namely (cf. Corollary \ref{tranMac}):
\begin{align} \label{partofcor3.1}
\sum_{1\leq k_1\leq k_2\leq\cdots\leq k_t}\frac{q^{k_1+k_2+\cdots+k_t}}{\prod_{j=1}^t(1-q^{k_j})^2}
&=\sum_{1\leq k_1\leq k_2\leq\cdots\leq k_{2t-1}}\frac{k_1\, q^{k_1+k_3+\cdots+k_{2t-1}}}{\prod_{j=1}^{2t-1}(1-q^{k_j})}.
\end{align}
The left-hand side is the generating function for $M(t,n)$ by definition. As an example, show that $M(2,4)=14$. We examine partitions with exactly two sizes of parts (possibly equal). Thus the relevant partitions are $3^11^1, 2^12^1, 2^11^2, 1^31^1, 1^21^2,1^11^3$ (the exponents denote frequency of appearance), and we get the sum of the product of the frequencies $1\cdot1+1\cdot1+1\cdot2+3\cdot1+2\cdot2+1\cdot3=14$.

\smallskip
\noindent
On the other hand, for $2t-1=3$, the right-hand is considering partitions with three part sizes where the second part need not appear at all, namely 
$$3^13^01^1, 3^11^01^1, 2^12^02^1, 2^11^11^1, 2^11^01^2, 2^12^01^2, 1^31^01^1, 1^21^01^2, 1^21^11^1, 1^11^21^1, 1^11^11^2, 1^11^01^3$$ 
and the sum of the smallest parts is $1+1+1+2+1+1+1+1+1+1+1+1+1=14$. We shall discuss the possibilities suggested by this result in the conclusion of the paper.

\smallskip
\noindent
Finally we should say a word about the methods we use. Our work was inspired by identities discovered by Karl Dilcher \cite{D}. In the next section we outline the path from Dilcher's results to Corollary \ref{tranMac}. The following two sections are devoted to the proof of Theorem \ref{T3} from which 
Corollary \ref{tranMac} is derived. Section 5 garners another variant to those from the earlier sections. The next two sections lead to the wonderful congruences, of Section $8$, for both $M(t,n)$ and $MO(t,n)$. We conclude with a section devoted to open problems and comments on salient points revolving identity \eqref{partofcor3.1}.

\section{Paving the road to Theorem 3.1} 

\noindent 
In \cite[Theorem 4]{D}, K. Dilcher established an interesting $q$-identity: for arbitrary positive integers $n$ and $t$, there holds
\begin{equation}\label{Did}
\sum_{k=1}^n \frac{(-1)^{k-1}\qbin{n}{k}q^{\binom{k}{2}+tk}}{[k]_q^{t}}
=\sum_{1\leq k_1\leq \cdots \leq k_{t}\leq n}\frac{q^{k_1+k_2+\dots+k_{t}}}{[k_1]_q [k_2]_q \cdots [k_{t}]_q}
\end{equation}
where $[x]_q :=\frac{1-q^x}{1-q}$ and $\qbin{x}{k}=\frac{[x]_q[x-1]_q\cdots [x-k+1]_q}{[1]_q[2]_q\cdots [k]_q}$ are the Gaussian $q$-binomial coefficients.
On the other hand, according to \cite[Theorem 2.3]{MSS},
$$\sum_{k=1}^n \frac{(-1)^{k-1}\qbin{n}{k}[k]_qq^{\binom{k}{2}-k(n-1)}}{\qbin{x+k}{k}}
\!\!\!\!\!
\sum_{1\leq k_1\leq \cdots \leq k_{t}\leq n}\frac{q^{k_1+k_2+\dots+k_{t}}}{[x+k_1]_q [x+k_2]_q \cdots [x+k_{t}]_q}
=\frac{q^{tn}[n]_q}{[x+n]_q^{t+1}}$$
which is equivalent to, see also \cite[Corollary 3.3]{X} and replace $x$ by $q^x$ there,
\begin{equation}\label{MSSid}
\sum_{k=1}^n \frac{(-1)^{k-1}\qbin{n}{k}[k]_qq^{\binom{k}{2}+tk}}{[x+k]_q^{t+1}}
=\frac{1}{\qbin{x+n}{n}}\sum_{1\leq k_1\leq \cdots \leq k_{t}\leq n}\frac{q^{k_1+k_2+\dots+k_{t}}}{[x+k_1]_q [x+k_2]_q \cdots [x+k_{t}]_q}
\end{equation}
via the $q$-inverse pair formula
$$b_n=\sum_{k=1}^n(-1)^{k-1}\qbin{n}{k}a_k\quad\Leftrightarrow\quad
a_n=\sum_{k=1}^n(-1)^{k-1} q^{\binom{n-k}{2}}\qbin{n}{k}b_k.$$
Notice that the identity \eqref{MSSid} is a generalization of \eqref{Did} by setting $x=0$.

\noindent
We show the rational multiple sum identity
\begin{equation}\label{mhsid}
\sum_{k=1}^n \frac{(-1)^{k-1}\binom{n}{k}}{k^t\binom{x+k}{k}}
=\sum_{1\leq k_1\leq \cdots \leq k_{t}\leq n} \frac{k_1}{x+k_1}\cdot
\frac{1}{k_1k_2 \cdots k_t},
\end{equation}
and then we find two $q$-analogs of \eqref{mhsid} which are some variations of the identity \eqref{MSSid}. 

\noindent 
The first $q$-identity is
\begin{equation}\label{ATidA}
\sum_{k=1}^n\frac{(-1)^{k-1}\qbin{n}{k}(1+q^k)q^{\binom{k}2+tk}}{[k]_q^{2t}\qbin{n+k}{k}}
=\sum_{1\leq k_1\leq k_2\leq\cdots\leq k_{2t}\leq n}\frac{q^{n+k_1+k_3\cdots+k_{2t-1}}+q^{k_2+k_4+\cdots+k_{2t}}}
{[n+k_1]_q[k_2]_q\cdots[k_{2t}]_q},
\end{equation}
and the second one is
\begin{equation}\label{ATidB}
\sum_{k=1}^n \frac{(-1)^{k-1}\qbin{n}{k}q^{\binom{k}{2}+(x+2t)k}}{[k]_q^{2t}\qbin{x+k}{k}}
=\sum_{1\leq k_1\leq k_2\leq \cdots \leq k_{2t}\leq n}\frac{q^{(x+k_1)+k_2+\dots+k_{2t}}}{[x+k_1]_q [k_2]_q \cdots [k_{2t}]_q}.
\end{equation}
We also provide a new proof of the identity \cite[equation (23)]{HP}:
\begin{equation*}%\label{HPid}
\sum_{1\leq k_1\leq k_2\leq\cdots\leq k_t\leq n}\frac{q^{k_1+k_2+\cdots+k_t}}{[k_1]_q^2[k_2]_q^2\cdots[k_t]_q^2}
=\sum_{k=1}^n\frac{(-1)^{k-1}\qbin{n}{k}(1+q^k)q^{\binom{k}2+tk}}{[k]_q^{2t}\qbin{n+k}{k}}.
\end{equation*}

\section{Generalizing the rational multiple sum identity \eqref{mhsid}}

\noindent
We start by proving a very useful preliminary result.

\begin{lemma} \label{Master} If two sequences are related by $\sum_{k=1}^n(-1)^{k-1}\binom{n}k\,a_k=b_n$ then, for all positive integers $n$ and $t$, 
$$\sum_{k=1}^n\frac{(-1)^{k-1}\binom{n}k\,a_k}{(z+k)^{t}}
=\frac{1}{\binom{z+n}{n}}\sum_{1\leq k_1\leq\cdots\leq k_{t}\leq n}\frac{b_{k_1}\,\binom{z+k_1}{k_1}}{\prod_{j=1}^{t}(z+k_j)}.$$
\end{lemma}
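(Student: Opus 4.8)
The plan is to prove the identity by induction on $t$. The base case $t=1$ asserts that if $\sum_{k=1}^n(-1)^{k-1}\binom nk a_k=b_n$ then
$$\sum_{k=1}^n\frac{(-1)^{k-1}\binom nk a_k}{z+k}=\frac{1}{\binom{z+n}{n}}\sum_{k_1=1}^n\frac{b_{k_1}\binom{z+k_1}{k_1}}{z+k_1}.$$
I would establish this by recognizing $\binom{z+k}{k}/(z+k)$-type coefficients via the standard partial-fraction expansion $\frac{1}{\binom{z+n}{n}}=\sum_{k=0}^n(-1)^k\binom nk\frac{n}{z+k}\cdot\frac{1}{?}$ — more precisely, use the well-known identity $\frac{n!}{(z+1)(z+2)\cdots(z+n)}=\sum_{k=1}^n(-1)^{k-1}\binom nk\frac{k}{z+k}$ (equivalently $\frac{1}{\binom{z+n}{n}}\cdot\frac{1}{z}$-free versions), and then expand $b_{k_1}$ using its definition in terms of the $a_k$'s, swapping the order of summation. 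The combinatorial core will be an Abel/Vandermonde-type identity for $\sum$ over the nested index; I expect it to reduce to a finite partial-fraction decomposition in the variable $z$.

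Alternatively — and this is the route I would actually pursue — I would prove the statement by induction on $t$ using a single telescoping/recursion step that relates the $(z+k)^{-t}$ sum to the $(z+k)^{-(t-1)}$ sum. The key algebraic identity to isolate is a ``raising'' relation of the form
$$\frac{1}{\binom{z+n}{n}}=\sum_{k=1}^n(-1)^{k-1}\frac{\binom nk}{z+k}\cdot c_{n,k}$$
for suitable explicit $c_{n,k}$ (namely $c_{n,k}=k\binom{z+k}{k}^{-1}\binom{z+k}{k}=k$ after simplification, so that $\frac{1}{\binom{z+n}{n}}=\sum_{k=1}^n(-1)^{k-1}k\binom nk\frac{1}{z+k}\cdot\frac{\binom{z+k}{k}}{\binom{z+n}{n}}$ collapses correctly), together with the observation that the inner nested sum $\sum_{1\le k_1\le\cdots\le k_t\le n}$ splits as $\sum_{k_t=1}^n\frac{1}{z+k_t}\sum_{1\le k_1\le\cdots\le k_{t-1}\le k_t}(\cdots)$. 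Feeding the inductive hypothesis (applied with $n$ replaced by $k_t$) into this split should, after moving the factor $\binom{z+k_t}{k_t}$ appropriately and re-summing over $k_t$, reproduce the left-hand side at level $t$; the crucial cancellation is that $\frac{1}{\binom{z+k_t}{k_t}}\cdot\binom{z+k_t}{k_t}=1$ so the weights telescope cleanly.

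Concretely, the steps in order would be: (1) prove the $t=1$ case directly from the inversion hypothesis and the partial-fraction expansion of $1/\binom{z+n}{n}$; (2) assume the formula for $t-1$ and all $n$; (3) in the right-hand side at level $t$, peel off the innermost variable to write it as $\frac{1}{\binom{z+n}{n}}\sum_{k_t=1}^n\frac{1}{z+k_t}\binom{z+k_t}{k_t}^{-1}\cdot\big[\text{(RHS at level }t-1\text{ with }n\mapsto k_t)\big]\cdot\binom{z+k_t}{k_t}$ — wait, here I must be careful to re-insert exactly the factor the inductive hypothesis produces; (4) invoke the inductive hypothesis to replace that bracket by $\sum_{j=1}^{k_t}\frac{(-1)^{j-1}\binom{k_t}{j}a_j}{(z+j)^{t-1}}$; (5) swap the $k_t$ and $j$ summations and evaluate $\sum_{k_t=j}^n \binom{k_t}{j}\big/\big((z+k_t)\binom{z+n}{n}\big)$ in closed form — this inner sum is the step I expect to be the main obstacle, as it requires a genuine hypergeometric/partial-fraction evaluation identifying it with $\frac{1}{(z+j)\binom{z+n}{n}}\binom{z+j}{j}$ times an appropriate constant so that the $a_j$-coefficient becomes $(-1)^{j-1}\binom nj/(z+j)^t$. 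Once that summation lemma is in hand the induction closes immediately. An honest alternative for step (5) is to avoid it entirely by instead doing the induction on the *nested-sum* side via a discrete integration (summation by parts) in $z$, but I anticipate the hypergeometric evaluation to be the cleanest; in any case, that single binomial-sum evaluation is where the real work lies, everything else being bookkeeping.
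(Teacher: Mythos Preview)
Your strategy is essentially the paper's: establish $t=1$ and then iterate (equivalently, induct on $t$), with all the content residing in a single binomial-sum evaluation. You have correctly located that evaluation as the crux, but your formula in step~(5) is off in two places. When you peel off $k_t$ and multiply/divide by $\binom{z+k_t}{k_t}$ so that the bracket becomes the level-$(t{-}1)$ right-hand side with $n\mapsto k_t$, the factor $\binom{z+k_t}{k_t}$ \emph{remains outside} after you invoke the inductive hypothesis. Hence after swapping the $j$ and $k_t$ sums the inner sum to evaluate is
\[
\sum_{k=m}^{n}\frac{\binom{z+k}{k}}{z+k}\binom{k}{m},
\]
not the version without $\binom{z+k}{k}$; and its closed form is $\dfrac{\binom{z+n}{n}}{z+m}\binom{n}{m}$, not something involving $\binom{z+m}{m}$. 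With this correction the $a_j$-coefficient collapses to $(-1)^{j-1}\binom{n}{j}/(z+j)^t$ exactly as needed.

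The paper proves this key identity by a one-line telescoping (a WZ pair): with $F(m,k)=\dfrac{\binom{z+k}{k}}{z+k}\binom{k}{m}$ and $G(m,k)=F(m,k)\cdot\dfrac{k-m}{z+m}$ one checks $F(m,k)=G(m,k+1)-G(m,k)$, so the sum equals $G(m,n+1)=\dfrac{\binom{z+n}{n}}{z+m}\binom{n}{m}$. Your separate and vaguer treatment of the base case $t=1$ via partial fractions is unnecessary: the same computation, applied once to the hypothesis $\sum_{k}(-1)^{k-1}\binom{n}{k}a_k=b_n$, already yields $t=1$ (the paper phrases this as a matrix-product identity, but it is the same calculation).
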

\begin{proof} We may readily represent the given hypothesis in an infinite matrix form
\begin{align}\label{hypo} \left[(-1)^{j-1}\binom{i}j\right]_{i,j=1}^{\infty}\mathbf{a}^{T}=\mathbf{b}^T \end{align}
where $\mathbf{a}=[a_1,a_2,\dots]$ and $\mathbf{b}=[b_1,b_2,\dots]$ are row matrices. Introduce two more matrices defined by
\begin{align*} 
\left[\frac{\binom{z+j}{j}}{(z+j)\binom{z+i}{i}}\cdot \delta_{i\geq j}\right]_{i,j=1}^{\infty} \qquad \text{and} \qquad
\left[\frac{(-1)^{j-1}\binom{i}j}{z+j}\right]_{i,j=1}^{\infty}
\end{align*}
and we claim the matrix product identity that
\begin{align} \label{step1}
\left[\frac{\binom{z+k}{k}}{(z+k)\binom{z+i}{i}} \cdot \delta_{i\geq k}\right]_{i,k=1}^{\infty}\pmb{\cdot} \left[(-1)^{j-1}\binom{k}j\right]_{k,j=1}^{\infty}
=\left[\frac{(-1)^{j-1}\binom{i}j}{z+j}\right]_{i,j=1}^{\infty}, \end{align}
which is equivalent to the binomial sum identity
$$\sum_{k=m}^n\frac{\binom{z+k}{k}}{z+k}\cdot\binom{k}m=\frac{\binom{z+n}{n}}{m+z}\cdot \binom{n}m, \qquad
\text{for $n, m\geq1$}.$$
If we denote the summand by $F(m,k):=\frac{\binom{z+k}{k}}{z+k}\cdot\binom{k}m$ and $G(m,k):=F(m,k)\cdot\frac{k-m}{z+m}$ then one can check that
$F(m,k)=G(m,k+1)-G(m,k)$ and hence (after some simplifications)
$$\sum_{k=m}^nF(m,k)=G(m,n+1)-G(m,m)=G(m,n+1)=\frac{\binom{z+n}{n}}{z+m}\cdot\binom{n}m,$$
as desired. The case $t=1$ of the Lemma follows from (\ref{hypo}) and (\ref{step1}) because we now have
\begin{align*}
\left[\frac{(-1)^{j-1}\binom{i}j}{z+j}\right]_1^{\infty} \pmb{a}^T=
\left[\frac{\binom{z+k}{k} \cdot \delta_{i\geq k}}{(z+k)\binom{z+i}{i}}\right]_1^{\infty}\left[(-1)^{j-1}\binom{k}j\right]_1^{\infty}\pmb{a}^T
=\left[\frac{\binom{z+k}{k} \cdot \delta_{i\geq k}}{(z+k)\binom{z+i}{i}}\right]_1^{\infty}\pmb{b}^T, \end{align*}
which tantamount 
$$\sum_{k=1}^n\frac{(-1)^{k-1}\binom{n}k\,a_k}{z+k}
=\frac{1}{\binom{z+n}{n}}\sum_{k=1}^n\frac{b_{k}\,\binom{z+k}{k}}{z+k}.$$

\noindent
The general case (of any $t$) is achieved by a repeated application of what we proved for $t=1$.  The proof is complete.
\end{proof}

\noindent
Next, we are able to derive the following corollary. Notice that \eqref{mhsid} would now become the special case for $z=0$.

\begin{corollary} For all positive integers $n$ and $t$, we have
$$\sum_{k=1}^n\frac{(-1)^{k-1}\binom{n}k}{(z+k)^{t}\binom{x+k}k}
=\frac{1}{\binom{z+n}{n}}\sum_{1\leq k_1\leq\cdots\leq k_{t}\leq n}\frac{k_1\binom{z+k_1}{k_1}}{(x+k_1)\prod_{j=1}^{t}(z+k_j)}.$$
\end{corollary}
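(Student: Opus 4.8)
The plan is to apply Lemma \ref{Master} with the specific coefficient sequence $a_k := 1/\binom{x+k}{k}$. With this choice the left-hand side of the Lemma's conclusion is \emph{verbatim} the left-hand side of the Corollary (the $(z+k)^t$ in the denominator is already there), so the entire task collapses to identifying the companion sequence
$$b_n := \sum_{k=1}^n\frac{(-1)^{k-1}\binom{n}{k}}{\binom{x+k}{k}}$$
and verifying that $b_n = n/(x+n)$. Once this is known, the right-hand side of Lemma \ref{Master} becomes
$$\frac{1}{\binom{z+n}{n}}\sum_{1\le k_1\le\cdots\le k_t\le n}\frac{(k_1/(x+k_1))\binom{z+k_1}{k_1}}{\prod_{j=1}^t(z+k_j)}
=\frac{1}{\binom{z+n}{n}}\sum_{1\le k_1\le\cdots\le k_t\le n}\frac{k_1\binom{z+k_1}{k_1}}{(x+k_1)\prod_{j=1}^t(z+k_j)},$$
which is exactly the asserted right-hand side.

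To evaluate $b_n$ I would invoke the Euler Beta integral: for $k\ge 1$,
$$\frac{1}{\binom{x+k}{k}}=\frac{k!\,x!}{(x+k)!}=k\,B(x+1,k)=k\int_0^1 u^{x}(1-u)^{k-1}\,du.$$
Using $k\binom{n}{k}=n\binom{n-1}{k-1}$, interchanging sum and integral, and applying the binomial theorem,
$$b_n=n\int_0^1 u^{x}\sum_{k=1}^n(-1)^{k-1}\binom{n-1}{k-1}(1-u)^{k-1}\,du
=n\int_0^1 u^{x}\bigl(1-(1-u)\bigr)^{n-1}\,du
=n\int_0^1 u^{x+n-1}\,du=\frac{n}{x+n}.$$
(Equivalently, this is the classical evaluation $\sum_{k=0}^n(-1)^k\binom{n}{k}/\binom{x+k}{k}=x/(x+n)$, so that $b_n=1-x/(x+n)$; one may also derive it from the standard partial-fraction identity $\sum_{k=0}^n(-1)^k\binom{n}{k}/(x+k)=n!/\bigl(x(x+1)\cdots(x+n)\bigr)$ after writing $k/(x+k)=1-x/(x+k)$ and using $\sum_{k=1}^n(-1)^{k-1}\binom{n}{k}=1$.)

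Substituting $b_{k_1}=k_1/(x+k_1)$ back into the conclusion of Lemma \ref{Master} then finishes the proof. The only genuine step here is the evaluation $b_n=n/(x+n)$; everything else is a direct specialization of the already-established Lemma, so I do not anticipate a serious obstacle. The single point worth stating explicitly is that Lemma \ref{Master} is valid for a free parameter (there denoted $z$) and for an arbitrary sequence $(a_k)$, and we are merely applying it with $a_k$ depending rationally on the additional parameter $x$; since all sums involved are finite, no question of convergence arises.
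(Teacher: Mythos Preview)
Your proposal is correct and follows essentially the same route as the paper: specialize Lemma \ref{Master} to $a_k=1/\binom{x+k}{k}$ and verify $b_n=n/(x+n)$. The only difference is in how that auxiliary identity is established---the paper uses a WZ-pair certificate, whereas you use the Beta integral (and note the classical partial-fraction alternative); both are short and standard, so neither buys anything substantial over the other.
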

\begin{proof} Choose $a_k=\frac1{\binom{x+k}k}$ and $b_k=\frac{k}{x+k}$ to verify the hypothesis $\sum_{k=1}^n(-1)^{k-1}\frac{\binom{n}k}{\binom{x+k}k}=\frac{n}{x+n}$ is satisfied. This, however, cane be proved via the Wilf-Zeilberger (WZ) pair: if we let $F(n,k)=(-1)^{k-1}\frac{\binom{n}k}{\binom{x+k}k}$ and $G(n,k)=\frac{x+k}{x+n}F(n,k)$ then $F(n,k)-G(n,k+1)-G(n,k)$ can easily be checked. Now, apply Lemma \ref{Master}.
\end{proof}

\smallskip
\begin{remark} Setting $x=n$, and by \cite[Theorem 2.3]{HPT}, or letting $q\to 1$ in Theorem \ref{T3} (shown below), we get
$$\sum_{1\leq k_1\leq \cdots \leq k_{t}\leq n} \frac{1}{k_1^2 k_2^2\cdots k_{t}^2}
=2\sum_{k=1}^n \frac{(-1)^{k-1}\binom{n}{k}}{k^{2t}\binom{n+k}{k}}
=\sum_{1\leq k_1\leq k_2\leq \cdots \leq k_{2t}\leq n}\frac{2}{(n+k_1)k_2 \cdots k_{2t}}.$$
Moreover, taking the limit as $n\to\infty$, we find
\begin{align*}
\sum_{\mathcal{A}_t}\frac{1}{k_1^2 k_2^2\cdots k_{t}^2}&=
2\lim_{n\to\infty}\sum_{1\leq k_1\leq k_2\leq \cdots \leq k_{2t}\leq n}
\frac{1}{(1+\frac{k_1}{n})\frac{k_2}{n} \cdots \frac{k_{2t}}{n}}\cdot \frac{1}{n^{2t}}\\
&=\int_{T_{2t-1}}\frac{\ln(1+x_1)}{x_1x_2\dots x_{2t-1}}\,dx_1dx_2\dots dx_{2t-1},
\end{align*}
where $\mathcal{A}_t$ (unbounded set of lattice points) and $\mathcal{T}_t$ (a simplex) are defined as
\begin{align*} \mathcal{A}_t&=\{(k_1,\dots,k_t)\in\mathbb{N}^t: k_1\leq k_2\leq \cdots\leq k_t\} \qquad \text{and} \\
T_{t}&=\{(x_1,x_2,\dots,x_{t}):0<x_1\leq x_2\leq \dots\leq x_{t}\leq 1\},
\end{align*}
respectively. On the other hand, 
\begin{align*}
\int_{T_{2t-1}}\frac{\ln(1+x_1)\,dx_1dx_2\cdots dx_{2t-1}}{x_1x_2\dots x_{2t-1}}
&=\sum_{k=1}^{\infty}\frac{(-1)^{k-1}}{k}\int_{T_{2t-1}}\frac{x_1^{k-1}dx_1dx_2\dots dx_{2t-1}}{x_2\dots x_{2t-1}}\\
&=2\sum_{k=1}\frac{(-1)^{k-1}}{k^{2t}}=(1-2^{1-2t})\zeta(2t).
\end{align*}
Hence we recover this known result (see for example  \cite[Remark 3]{HP})
$$\sum_{\mathcal{A}_t}\frac{1}{k_1^2 k_2^2\cdots k_{t}^2}=2(1-2^{1-2t})\zeta(2t)$$
expressed in terms of the \emph{Riemann zeta function} $\zeta(s)$.
\end{remark}

\bigskip
\noindent

\section{A $q$-analogue of \eqref{mhsid}}

\noindent
In the present section, we upgrade the rational identity \eqref{mhsid} into a $q$-analogue generalization. To this end, we define the three finite sums (suppressing the variable $q$):
\begin{align*}
F_t(n):&=\sum_{1\leq k_1\leq\cdots\leq k_t\leq n}\frac{q^{k_1+k_2+\cdots+k_t}}{[k_1]_q^2[k_2]_q^2\cdots[k_t]_q^2},\\
G_t(n):&=\sum_{k=1}^n\frac{(-1)^{k-1}(1+q^k)q^{\binom{k}2+tk}\qbin{n}{k}}{[k]_q^{2t}\qbin{n+k}{k}}
=\frac{1}{\qbin{2n}{n}}\,\,\sum_{k=1}^n\frac{(-1)^{k-1}(1+q^k)q^{\binom{k}2+tk}\qbin{2n}{n-k}}{[k]_q^{2t}}, \\
H_t(n):&=\sum_{1\leq k_1\leq\cdots\leq k_{2t}\leq n}\frac{q^{n+k_1+k_3\cdots+k_{2t-1}}+q^{k_2+k_4+\cdots+k_{2t}}}{[n+k_1]_q[k_2]_q\cdots[k_{2t}]_q}.
\end{align*}

\smallskip
\noindent
Now we are ready to state the promised generalization as a triplet identity
which implies the identity in \eqref{ATidA} and \cite[equation (23)]{HP}:
\begin{theorem} \label{T3}
For integers $n, t\geq 1$, we have $F_t(n)=G_t(n)=H_t(n)$.
\end{theorem}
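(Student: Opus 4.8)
The plan is to establish the two equalities $F_t(n)=G_t(n)$ and $G_t(n)=H_t(n)$ separately, using quite different tools for each. For $G_t(n)=H_t(n)$ I would try to mimic the strategy used to prove \eqref{mhsid} and its corollary, namely finding a $q$-analogue of Lemma \ref{Master}. Concretely, the identity \eqref{MSSid} already expresses a sum of the shape $\sum_k \frac{(-1)^{k-1}\qbin{n}{k}[k]_q q^{\binom{k}{2}+tk}}{[x+k]_q^{t+1}}$ as an iterated $q$-harmonic sum divided by $\qbin{x+n}{n}$. The point is that the left side of \eqref{ATidA} has $2t$ copies of $[k]_q$ in the denominator together with the factor $(1+q^k)\qbin{n+k}{k}^{-1}$, so I would first rewrite $(1+q^k)/[k]_q^{2t} = (1+q^k)/[k]_q\cdot 1/[k]_q^{2t-1}$ and recognize $(1+q^k)q^{\binom k2}=q^{\binom k2}+q^{\binom{k+1}{2}}$, which splits $G_t(n)$ into two Dilcher/MSS-type sums at shifted parameters; the two resulting iterated sums should combine into the ``$q^{n+k_1+k_3+\cdots}+q^{k_2+k_4+\cdots}$'' numerator of $H_t(n)$ after relabeling $n+k_1$ as the first index. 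The cleanest route is probably to prove a $q$-version of the matrix factorization \eqref{step1}: find an explicit lower-triangular matrix $M=[M_{i,k}]$ (a $q$-analogue of $\binom{z+k}{k}\delta_{i\ge k}/((z+k)\binom{z+i}{i})$, with $z$ eventually specialized to $n$ so that $[n+k_1]_q$ appears) whose product with the $q$-binomial inversion matrix $[(-1)^{j-1}\qbin{k}{j}]$ has a closed form, then iterate. The defining telescoping identity will be a $q$-binomial sum of Pfaff–Saalschütz type, provable by creative telescoping ($q$-WZ); I expect this to be the main obstacle, since getting the weights $q^{\binom k2+tk}$ and the split numerator to emerge correctly requires tracking several $q$-powers through the iteration.

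For $F_t(n)=G_t(n)$ the natural approach is induction on $n$ (for fixed $t$), or better, a direct partial-fraction / telescoping argument in the style of Dilcher. Since $G_t(n)$ is a sum over $k$ with summand rational in $q^k$, one can try to exhibit $F_t(n)-F_t(n-1)$ and $G_t(n)-G_t(n-1)$ as the same explicit single $q$-hypergeometric sum. Alternatively—and this is the route I would actually pursue—observe that \cite[equation (23)]{HP} already asserts $F_t(n)=G_t(n)$; but since the excerpt says the paper ``provides a new proof'' of that identity, I would instead derive it as a formal consequence of a recursion. Writing $F_t(n)=\sum_{k_t=1}^n \frac{q^{k_t}}{[k_t]_q^2}F_{t-1}(k_t)$ and finding a matching telescoping recursion for $G_t$, one reduces everything to the base case $t=1$, where $F_1(n)=G_1(n)$ is the classical identity $\sum_{k=1}^n \frac{q^k}{[k]_q^2}=\sum_{k=1}^n \frac{(-1)^{k-1}(1+q^k)q^{\binom k2+k}\qbin nk}{[k]_q^2\qbin{n+k}k}$, which itself follows from \eqref{MSSid} at $x=0$, $t=1$ combined with a symmetrization $k\leftrightarrow$ ``$n+k$'' that produces the $(1+q^k)$ factor.

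The single genuinely new computation is the base-case symmetrization and the $q$-WZ verification of the telescoping lemma; everything else is bookkeeping. So in order I would: (1) prove the $q$-analogue of \eqref{step1} via $q$-WZ, obtaining a $q$-version of Lemma \ref{Master} with the parameter placed so that $[x+k_1]_q$ or $[n+k_1]_q$ appears after one application; (2) iterate it $t$ times, with $2t$ copies of $[\cdot]_q$, to get $H_t(n)$ on the combinatorial side and $G_t(n)$ (after the $(1+q^k)=1+q^k$ split and the $\qbin{n+k}k$ absorption) on the alternating side, yielding $G_t(n)=H_t(n)$; (3) prove $F_t(n)=G_t(n)$ by the recursion-plus-base-case argument, invoking \eqref{MSSid} for the base case. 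The main obstacle, as noted, is step (1)–(2): matching the $q$-powers $q^{\binom k2+tk}$ and the two-term numerator of $H_t$ will require care, and one may need an intermediate identity at general parameter $x$ (this is presumably exactly the role of \eqref{ATidB}, which I would prove first and then specialize/symmetrize to reach \eqref{ATidA}).
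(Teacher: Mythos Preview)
Your plan splits the theorem into two separate equalities with two separate strategies, but the paper treats all three quantities uniformly: it shows that each of $F_t,G_t,H_t$ satisfies the \emph{same} two-variable recurrence
\[
X_t(n)-X_t(n-1)=\frac{q^n}{[n]_q^2}\,X_{t-1}(n),
\]
together with matching initial values, and then concludes by induction. For $F_t$ this recurrence is immediate (peel off $k_t=n$); for $G_t$ it follows from the closed-form evaluation of $\dfrac{\qbin{n}{k}}{\qbin{n+k}{k}}-\dfrac{\qbin{n-1}{k}}{\qbin{n+k-1}{k}}=\dfrac{[n-1]_q!^2(1-q^k)^2q^{n-k}}{[n-k]_q![n+k]_q!}$, which cancels exactly two copies of $[k]_q$ and shifts $t\to t-1$; for $H_t$ it is obtained by isolating $k_{2t}=n$ and $k_{2t-1}=n$, reindexing $k_1\to k_1-1$ in $H_t(n-1)$, and checking that the leftover sum vanishes via the partial-fraction identity $\frac{1}{[n]_q[n+k]_q}+\frac{q^k}{[n+k]_q[k]_q}-\frac{1}{[n]_q[k]_q}=0$. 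You essentially have the $F_t=G_t$ half of this (your telescoped recursion $F_t(n)=\sum_{m\le n}\frac{q^m}{[m]_q^2}F_{t-1}(m)$ is the same recurrence summed), but you do not apply the same idea to $H_t$.

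Your proposed route to $G_t=H_t$ --- iterate a $q$-analogue of Lemma~\ref{Master}, specialize \eqref{ATidB} at $x=n$, and ``symmetrize'' to manufacture the $(1+q^k)$ factor and the two-term numerator $q^{n+k_1+k_3+\cdots}+q^{k_2+k_4+\cdots}$ --- has a real gap. Specializing \eqref{ATidB} at $x=n$ yields the left side $\sum_k(-1)^{k-1}\qbin{n}{k}q^{\binom{k}{2}+(n+2t)k}/\bigl([k]_q^{2t}\qbin{n+k}{k}\bigr)$ and the right side with numerator $q^{n+k_1+k_2+\cdots+k_{2t}}$; neither the exponent $(n+2t)k$ matches $G_t$'s exponent $tk$, nor does the numerator match $H_t$'s alternating odd/even pattern. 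There is no evident ``$k\leftrightarrow n+k$'' symmetry in these sums that would fix both mismatches simultaneously, so the bookkeeping you defer is in fact the whole difficulty. (Your base case $F_1=G_1$ via \eqref{MSSid} at $x=0,t=1$ has the same issue: that specialization gives a sum with $[k]_q^{-1}$, not $[k]_q^{-2}$, and no $(1+q^k)$.) The paper sidesteps all of this by proving the recurrence for $H_t$ directly; I would recommend you try that computation instead.
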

\begin{proof} We start by proving $F_t(n)-F_t(n-1)=\frac{q^n}{[n]_q^2}F_{t-1}(n)$. Indeed, separate the case $k_t=n$ so that two sums emerge
\begin{align*} F_t(n)
&=\sum_{1\leq k_1\leq\cdots\leq k_{t-1}\leq n}\frac{q^{k_1+k_2+\cdots+k_{t-1}+n}}{[k_1]_q^2[k_2]_q^2\cdots[k_{t-1}]_q^2[n]_q^2}
+\sum_{1\leq k_1\leq\cdots\leq k_t\leq n-1}\frac{q^{k_1+k_2+\cdots+k_t}}{[k_1]_q^2[k_2]_q^2\cdots[k_t]_q^2}
\end{align*}
which translates to $F_t(n)=\frac{q^n}{[n]_q^2}F_{t-1}(n)+F_t(n-1)$, as desired.

\smallskip
\noindent
Let's verify that $G_t(n)$ satisfies the same recurrence as that of $F_t(n)$. First, observe that
\begin{align*} \frac{\qbin{n}{k}}{\qbin{n+k}{k}}-\frac{\qbin{n-1}{k}}{\qbin{n+k-1}{k}}
&=\frac{[n]_q!^2}{[n-k]_q![n+k]_q!}-\frac{[n-1]_q!^2}{[n-k-1]_q![n+k-1]_q!} \\
&=\frac{[n-1]_q!^2}{[n-k]_q![n+k]_q!}\left((1-q^n)^2-(1-q^{n-k})(1-q^{n+k})\right) \\
&=\frac{[n-1]_q!^2(1-q^k)^2q^{n-k}}{[n-k]_q![n+k]_q!}.
\end{align*}
Consequently, one obtains
\begin{align*} G_t(n)-G_t(n-1)
&=[n-1]_q!^2\sum_{k=1}^n(-1)^{k-1}\frac{(1+q^k)q^{\binom{k}2+tk+n-k}}{[k]_q^{2t-2}[n-k]_q![n+k]_q!} \\
&=\frac{[n-1]_q!^2}{[2n]_q!}\sum_{k=1}^n(-1)^{k-1}\frac{(1+q^k)q^{\binom{k}2+tk+n-k}\qbin{2n}{n-k}}{[k]_q^{2(t-1)}} \\
&=\frac{q^n}{[n]_q^2}\left(\frac{[n]_q!^2}{[2n]_q!}\sum_{k=1}^n(-1)^{k-1}\frac{(1+q^k)q^{\binom{k}2+(t-1)k}\qbin{2n}{n-k}}{[k]_q^{2(t-1)}}\right).
\end{align*}
That means, $G_t(n)-G_t(n-1)=\frac{q^n}{[n]_q^2}G_{t-1}(n)$.

\smallskip
\noindent 
Finally, we look into $H_t(n)$. We first isolate the terms with $k_{2t}=n$, and then the terms with $k_{2t-1}=n$, in such a way that
\begin{align}\label{e1} H_t(n)
&=\frac{q^n}{[n]_q}\sum_{1\leq k_1\leq\cdots\leq k_{2t-1}\leq n}\frac{q^{k_1+\cdots+k_{2t-1}}+q^{k_2+\cdots+k_{2t-2}}}{[n+k_1]_q[k_2]_q\cdots[k_{2t-1}]_q} \nonumber\\
& \qquad +\sum_{1\leq k_2\leq\cdots\leq k_{2t}\leq n-1}\,\,\,\sum_{k_1=1}^{k_2}\frac{q^{n+k_1+\cdots+k_{2t-1}}+q^{k_2+\cdots+k_{2t}}}
{[n+k_1]_q[k_2]_q\cdots[k_{2t}]_q}\nonumber\\
&=\frac{q^n}{[n]_q^2}H_{t-1}(n)
+\frac{q^n}{[n]_q}\sum_{1\leq k_1\leq\cdots\leq k_{2t-1}\leq n-1}\frac{q^{k_1+\cdots+k_{2t-1}}+q^{k_2+\cdots+k_{2t-2}}}
{[n+k_1]_q[k_2]_q\cdots[k_{2t-1}]_q}\nonumber\\
& \qquad +\sum_{1\leq k_2\leq\cdots\leq k_{2t}\leq n-1}\,\,\,\sum_{k_1=1}^{k_2}\frac{q^{n+k_1+\cdots+k_{2t-1}}+q^{k_2+\cdots+k_{2t}}}
{[n+k_1]_q[k_2]_q\cdots[k_{2t}]_q}.
\end{align}
On the other hand, rescaling  $k_1'=k_1-1$ in $H_t(n-1)$ we get
\begin{equation}\label{e2} 
H_t(n-1)=\sum_{1\leq k_2\leq\cdots\leq k_{2t}\leq n-1}\,\,\,\sum_{k_1'=0}^{k_2-1}\frac{q^{n+k_1'+\cdots+k_{2t-1}}+q^{k_2+\cdots+k_{2t}}}
{[n+k_1']_q[k_2]_q\cdots[k_{2t}]_q}.
\end{equation}
Comparing the inner sums in \eqref{e1} and  \eqref{e2} we find
\begin{align*}\sum_{k_1=1}^{k_2}&\frac{q^{n+k_1+\cdots+k_{2t-1}}+q^{k_2+\cdots+k_{2t}}}
{[n+k_1]_q[k_2]_q\cdots[k_{2t}]_q}-\sum_{k_1'=0}^{k_2-1}\frac{q^{n+k_1'+\cdots+k_{2t-1}}+q^{k_2+\cdots+k_{2t}}}
{[n+k_1']_q[k_2]_q\cdots[k_{2t}]_q}\\
&=q^{k_2}\cdot \frac{q^{n+k_3+\cdots+k_{2t-1}}+q^{k_4+\cdots+k_{2t}}}{[n+k_2]_q[k_2]_q\cdots[k_{2t}]_q}
-\frac{1}{[n]_q}\cdot \frac{q^{n+k_3+\cdots+k_{2t-1}}+q^{k_2+\cdots+k_{2t}}}{[k_2]_q\cdots[k_{2t}]_q}.
\end{align*}
Hence
\begin{align*} &H_t(n)-H_t(n-1)
=\frac{q^n}{[n]_q^2}H_{t-1}(n)+\frac{q^n}{[n]_q}\sum_{1\leq k_1\leq\cdots\leq k_{2t-1}\leq n-1}\frac{q^{k_1+\cdots+k_{2t-1}}+q^{k_2+\cdots+k_{2t-2}}}{[n+k_1]_q[k_2]_q\cdots[k_{2t-1}]_q} \\
&+\sum_{1\leq k_2\leq\cdots\leq k_{2t}\leq n-1}
\Big(q^{k_2}\cdot \frac{q^{n+k_3+\cdots+k_{2t-1}}+q^{k_4+\cdots+k_{2t}}}{[n+k_2]_q[k_2]_q\cdots[k_{2t}]_q}
-\frac{1}{[n]_q}\cdot \frac{q^{n+k_3+\cdots+k_{2t-1}}+q^{k_2+\cdots+k_{2t}}}{[k_2]_q\cdots[k_{2t}]_q}\Big). 
\end{align*}
So far, what we gathered takes the form $H_t(n)-H_t(n-1)=\frac{q^n}{[n]_q^2}H_{t-1}(n)+\mathbf{W}$
where $\mathbf{W}$ comprises the last two multi-sums (we renamed $k_2,\dots,k_{2r}\mapsto k_1,\dots,k_{2r-1}$ in the latter): 
\begin{align*}
\mathbf{W}:=
&\sum_{1\leq k_1\leq\cdots\leq k_{2t-1}\leq n-1}\Big(
\frac{q^{n}}{[n]_q}\cdot\frac{q^{k_1+\cdots+k_{2t-1}}+q^{k_2+\cdots+k_{2t-2}}}{[n+k_1]_q[k_2]_q\cdots[k_{2t-1}]_q} \\
&+
q^{k_1}\cdot \frac{q^{n+k_2+\cdots+k_{2t-2}}+q^{k_3+\cdots+k_{2t-1}}}{[n+k_1]_q[k_1]_q\cdots[k_{2t-1}]_q}
-\frac{1}{[n]_q}\cdot\frac{q^{n+k_2+\cdots+k_{2t-2}}+q^{k_1+\cdots+k_{2t-1}}}{[k_1]_q\cdots[k_{2t-1}]_q}
\Big).
\end{align*}
We now rearrange the expression $\mathbf{W}$ as follows
\begin{align*} 
\mathbf{W}
&=\sum_{1\leq k_1\leq\cdots\leq k_{2r-1}\leq n-1}
\left(\frac1{[n]_q[n+k_1]_q}+\frac{q^{k_1}}{[n+k_1]_q[k_1]_q}-\frac1{[n]_q[k_1]_q}\right)
\frac{q^{n+k_2+\cdots+k_{2t-2}}}{[k_2]_q\cdots[k_{2t-1}]_q} \\
&+\sum_{1\leq k_1\leq\cdots\leq k_{2t-1}\leq n-1}
\left(\frac{q^n}{[n]_q[n+k_1]_q}+\frac1{[n+k_1]_q[k_1]_q}-\frac1{[n]_q[k_1]_q}\right)
\frac{q^{k_1+k_3+\cdots+k_{2t-1}}}{[k_2]_q\cdots[k_{2t-1}]_q} 
\end{align*}
and realize that both terms in parentheses vanish, leading to the conclusion $\mathbf{W}=0$:
$$\frac1{[n]_q[n+k_1]_q}+\frac{q^{k_1}}{[n+k_1]_q[k_1]_q}-\frac1{[n]_q[k_1]_q}
=\frac{1-q^{k_1}+q^{k_1}(1-q^n)-1+q^{n+k_1}}{[n]_q[n+k_1]_q[k_1]_q}=0,$$
and the other one is obtained by swapping the indices $n$ and $k_1$.

\noindent 
Thus,  
$$H_t(n)-H_t(n-1)=\frac{q^n}{[n]_q^2}H_{t-1}(n).$$
The agreement in the initial conditions 
$$F_t(0)=G_t(0)=H_t(0)=0\quad\text{and}\quad
F_t(1)=G_t(1)=H_t(1)=\frac{q^t}{(1-q)^{2t}},$$ 
while satisfying the exact same recurrence relation, ensures the validity of the theorem. Therefore, the proof is complete.
\end{proof}

\begin{corollary} \label{tranMac} For an integer $t\geq1$, it holds that
\begin{align*}
\sum_{\mathcal{A}_t}\frac{q^{k_1+k_2+\cdots+k_t}}{\prod_{j=1}^r(1-q^{k_j})^2}
=\sum_{k=1}^{\infty}\frac{(-1)^{k-1}(1+q^k)q^{\binom{k}2+tk}}{(1-q^k)^{2t}}
=\sum_{\mathcal{A}_{2t-1}}\frac{k_1\cdot q^{k_1+k_3+\cdots+k_{2t-1}}}{\prod_{j=1}^{2t-1}(1-q^{k_j})}.
\end{align*}
\end{corollary}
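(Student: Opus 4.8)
The plan is to derive Corollary~\ref{tranMac} from Theorem~\ref{T3} by passing to the limit $n\to\infty$. First I would observe that in the limit $q$ is a fixed complex number with $|q|<1$, so that $[k]_q=\frac{1-q^k}{1-q}\to$ a nonzero constant and the $q$-binomial coefficients behave tamely. The three quantities $F_t(n)$, $G_t(n)$, $H_t(n)$ all converge as $n\to\infty$: for $F_t(n)$ this is immediate since the summand $q^{k_1+\cdots+k_t}/\prod[k_j]_q^2$ is independent of $n$ and the series converges absolutely (the factor $q^{k_t}$ forces geometric decay), giving $\lim_n F_t(n)=\sum_{\mathcal A_t}\frac{q^{k_1+\cdots+k_t}}{\prod_{j=1}^t(1-q^{k_j})^2}$ after clearing the $(1-q)^{2t}$ that is hidden in $[k_j]_q^2$; I would note the slight typo in the corollary ($\prod_{j=1}^r$ should read $\prod_{j=1}^t$) but not dwell on it.

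Next I would handle $G_t(n)$. Writing $\qbin{n}{k}/\qbin{n+k}{k}=\prod_{j=1}^k\frac{[n-j+1]_q}{[n+j]_q}=\prod_{j=1}^k\frac{1-q^{n-j+1}}{1-q^{n+j}}$, each factor tends to $1$ as $n\to\infty$ (since $|q|<1$), and the convergence is uniform in $k$ over any fixed range while the tail is controlled by the $q^{\binom k2}$ factor; hence $\lim_n G_t(n)=\sum_{k=1}^\infty\frac{(-1)^{k-1}(1+q^k)q^{\binom k2+tk}}{[k]_q^{2t}}$, which after absorbing $[k]_q^{2t}=(1-q^k)^{2t}/(1-q)^{2t}$ gives the middle expression in the corollary. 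For $H_t(n)$, the subtlety is that one index sits inside $[n+k_1]_q$; since $[n+k_1]_q\to\frac{1}{1-q}$ as $n\to\infty$ uniformly, and the remaining structure is that of a convergent nested sum with a $q^{k_1+k_3+\cdots}$ decay factor, the term $q^{n+k_1+k_3+\cdots+k_{2t-1}}$ in the numerator vanishes (it carries a $q^n$) while $q^{k_2+k_4+\cdots+k_{2t}}/([n+k_1]_q[k_2]_q\cdots[k_{2t}]_q)$ survives; re-indexing the surviving sum by relabelling the even-position variables and using $[n+k_1]_q\to(1-q)^{-1}$ collapses the $2t$-fold sum to a $(2t-1)$-fold sum with the factor $k_1$ appearing from $\sum_{k_1=1}^{k_2}1=k_1$ (after relabelling) — this is exactly the shape of the right-hand side $\sum_{\mathcal A_{2t-1}}\frac{k_1 q^{k_1+k_3+\cdots+k_{2t-1}}}{\prod(1-q^{k_j})}$.

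The main obstacle is making the $H_t(n)\to$ RHS step rigorous: one must justify interchanging the limit with the infinite nested sum and carefully track which of the two numerator terms survives and how the collapse of one summation variable produces the linear factor $k_1$. Concretely, after taking $n\to\infty$ the first numerator term $q^{n+\cdots}$ dies, and in the second term $[n+k_1]_q\to(1-q)^{-1}$, so $\lim_n H_t(n)=(1-q)\sum_{1\le k_1\le k_2\le\cdots\le k_{2t}}\frac{q^{k_2+k_4+\cdots+k_{2t}}}{[k_2]_q\cdots[k_{2t}]_q}$; since $k_1$ ranges freely over $1,\dots,k_2$ the sum over $k_1$ contributes a factor $k_2$, and relabelling $(k_2,k_3,\ldots,k_{2t})\mapsto(k_1,k_2,\ldots,k_{2t-1})$ turns this into $(1-q)\sum_{\mathcal A_{2t-1}}\frac{k_1 q^{k_1+k_3+\cdots+k_{2t-1}}}{[k_1]_q\cdots[k_{2t-1}]_q}$, and absorbing $[k_j]_q=(1-q^{k_j})/(1-q)$ for the $2t-1$ factors against the $2t-1$ copies of $(1-q)$ (one already present, $2t-2$ from the denominators... more precisely $(1-q)\cdot(1-q)^{-(2t-1)}=(1-q)^{-(2t-2)}$ matching $\prod 1/[k_j]_q=(1-q)^{2t-1}/\prod(1-q^{k_j})$ times the explicit $(1-q)$) yields exactly $\sum_{\mathcal A_{2t-1}}\frac{k_1 q^{k_1+k_3+\cdots+k_{2t-1}}}{\prod_{j=1}^{2t-1}(1-q^{k_j})}$. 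Dominated convergence (with dominating series $\sum |q|^{k_1+k_3+\cdots}/\prod|1-q^{k_j}|$, which converges for $|q|<1$) legitimises every interchange. I would present this as: state the three limits, verify each, and conclude equality of the limits from $F_t(n)=G_t(n)=H_t(n)$.
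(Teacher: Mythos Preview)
Your approach is exactly the paper's: divide the identity $F_t(n)=G_t(n)=H_t(n)$ of Theorem~\ref{T3} through by $(1-q)^{2t}$ and let $n\to\infty$. The paper states this in one line; you have correctly fleshed out the key step for the $H_t$ side (the $q^{n+\cdots}$ numerator term dies, $[n+k_1]_q\to(1-q)^{-1}$, the free sum over $k_1\le k_2$ produces the factor $k_2$, and a relabel gives the $(2t-1)$-fold sum with the linear factor $k_1$).

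One minor bookkeeping slip to fix: your computation of $\lim_n H_t(n)$ actually yields
\[
(1-q)\sum_{\mathcal A_{2t-1}}\frac{k_1\,q^{k_1+k_3+\cdots+k_{2t-1}}}{[k_1]_q\cdots[k_{2t-1}]_q}
=(1-q)^{2t}\sum_{\mathcal A_{2t-1}}\frac{k_1\,q^{k_1+k_3+\cdots+k_{2t-1}}}{\prod_{j=1}^{2t-1}(1-q^{k_j})},
\]
since $\prod_{j=1}^{2t-1}[k_j]_q^{-1}=(1-q)^{2t-1}/\prod(1-q^{k_j})$; so the raw limit is $(1-q)^{2t}$ times the right-hand side of the corollary, not the right-hand side itself. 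This is harmless because the same factor $(1-q)^{2t}$ appears in $\lim_n F_t(n)$ and $\lim_n G_t(n)$ as you noted, and dividing all three by it gives the stated identity --- but your parenthetical arithmetic with the $(1-q)$ powers is garbled and should be cleaned up.
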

\begin{proof} In Theorem \ref{T3}, divide through by $(1-q)^{2t}$ then increase $n\rightarrow\infty$. \end{proof}

\noindent
\begin{remark} Take the case $t=1$ in Theorem \ref{T3}, then $(1-q)^2F_1(n)=\sum_{k=1}^n\frac{q^k}{(1-q^k)^2}$ can easily be seen as a generating function: the coefficient of $q^m$ in this series is the sum of the {\it complimentary divisors} of the divisors of $m$ that are no larger than $n$. Simply notice
$$\sum_{k=1}^n\frac{q^k}{(1-q^k)^2}=\sum_{k=1}^n\sum_{j=1}^{\infty}jq^{kj}=\sum_{m=1}^{\infty}q^m\sum_{\substack{k\vert m \\ 1\leq k\leq n}}\frac{m}k.$$
\end{remark}

\noindent
Next, we furnish yet another proof for part of Corollary \ref{tranMac} based on an identity of Jacobi.

\begin{proposition} For $t\geq1$, we have the identity
$$\sum_{1\leq m_1 \leq\dots\leq m_t<\infty} \prod_{j=1}^t\frac{q^{m_t}}{(1-q^{m_t})^2}
=\sum_{m\geq1}\frac{(-1)^{m-1}q^{\binom{m}2+tm}(1+q^m)}{(1-q^m)^{2t}}.$$
\end{proposition}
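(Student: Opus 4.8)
The plan is to prove the two sides agree by recognizing both as specializations of the triplet identity $F_t(n)=G_t(n)=H_t(n)$ in Theorem~\ref{T3}, but since the Proposition is advertised as an \emph{independent} proof via an identity of Jacobi, I would instead argue directly. First I would observe that the left-hand side is exactly $\lim_{n\to\infty}F_t(n)$ after clearing the factor $(1-q)^{2t}$, so it suffices to show the right-hand side equals this limit; equivalently, by Corollary~\ref{tranMac} it suffices to re-derive the middle member $\sum_{k\ge1}(-1)^{k-1}(1+q^k)q^{\binom k2+tk}/(1-q^k)^{2t}$ from the left directly.

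The key idea is an induction on $t$ combined with a partial-fraction/telescoping manipulation. Let $L_t$ denote the left side and $R_t$ the right side. For $t=1$ one has $L_1=\sum_{m\ge1}q^m/(1-q^m)^2=\sum_{m\ge1}mq^m/(1-q^m)$ and $R_1=\sum_{m\ge1}(-1)^{m-1}q^{\binom m2+m}(1+q^m)/(1-q^m)^2$; the equality $L_1=R_1$ is a classical Lambert-series identity, and here is where Jacobi's identity enters — I would use the Jacobi-type expansion (e.g. the logarithmic derivative of the Jacobi triple product, or the partial-fraction expansion of $1/\prod(1-q^m)$) to rewrite $R_1$ as a Lambert series and match coefficients of $q^N$: the coefficient of $q^N$ on the right, $\sum_{m\mid N,\ \binom m2 + m\le \dots}$, collapses to $\sigma_1(N)$ by a sign-cancelling argument on the divisors of $N$. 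For the inductive step, I would show both $L_t$ and $R_t$ satisfy the same recursion in $t$: peeling off $m_t$ in $L_t$ and writing $q^{m_t}/(1-q^{m_t})^2 = \sum_{j\ge1} j q^{jm_t}$ lets one relate $L_t$ to $L_{t-1}$ with a weight, while on the right the recursion $R_t(n)-R_t(n-1) = q^n[n]_q^{-2}R_{t-1}(n)$ established in the proof of Theorem~\ref{T3} (specialized at $q$, $n\to\infty$) gives the matching relation. Matching the two recursions and the base case $t=1$ finishes the argument.

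The main obstacle I anticipate is the $t=1$ base case done ``from scratch'' using Jacobi rather than quoting Theorem~\ref{T3}: one must show $\sum_{m\ge1}(-1)^{m-1}q^{\binom m2+m}(1+q^m)/(1-q^m)^2 = \sum_{m\ge1}q^m/(1-q^m)^2$. The cleanest route is to expand each $1/(1-q^m)^2 = \sum_{j\ge1} j q^{(j-1)m}$, swap summation order, and on the alternating side use that $\binom m2 + jm = \binom{m}{2} + jm$ are the exponents appearing in a Jacobi triple product expansion; grouping terms with a fixed value of the product $mj=N$ and invoking the pentagonal-number-style sign cancellation (equivalently, Jacobi's $\sum (-1)^{m-1}(2m-1)q^{\binom m2}= \prod(1-q^m)^3$ in a divisor-sum disguise) reduces the right-hand coefficient of $q^N$ to $\sigma_1(N)$, which is precisely the left-hand coefficient. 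I would present this coefficient extraction carefully, since the bookkeeping of which $(m,j)$ pairs survive is the only genuinely delicate point; everything else is the routine induction sketched above.
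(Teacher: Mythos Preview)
Your inductive scheme has a genuine gap. The recursion $G_t(n)-G_t(n-1)=\frac{q^n}{[n]_q^2}G_{t-1}(n)$ from the proof of Theorem~\ref{T3} is a recursion in the truncation parameter $n$, not in $t$; when you pass to the limit $n\to\infty$ both sides become $0$, so it gives no relation between $R_t$ and $R_{t-1}$. Likewise, ``peeling off $m_t$'' in $L_t$ does not produce $L_{t-1}$ but rather $\sum_{m\ge1}\frac{q^m}{(1-q^m)^2}F_{t-1}(m)$, which reintroduces the finite truncations. To make your induction go through you would have to carry the finite parameter $n$ throughout --- at which point you are simply reproving Theorem~\ref{T3}, the very thing you set out to avoid. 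Your base case $t=1$ is also only sketched: the vague appeal to ``pentagonal-number-style sign cancellation'' or to $\sum(-1)^{m-1}(2m-1)q^{\binom m2}=\prod(1-q^m)^3$ does not obviously collapse the alternating sum to $\sigma_1(N)$, and you acknowledge this is the delicate point without actually carrying it out.

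The paper's proof is entirely different and avoids induction on $t$ altogether. It treats both sides as coefficients in a generating function in an auxiliary variable $X=-4\sin^2 x$: the product $\prod_{m\ge1}\bigl(1+\frac{4q^m}{(1-q^m)^2}\sin^2 x\bigr)^{-1}$ expands as $\sum_{t\ge0}V_t(q)(-4\sin^2 x)^t$ with $V_t$ the left-hand side, while Jacobi's classical identity rewrites the same product as $\sum_{m\ge1}(-1)^{m-1}q^{\binom m2}(1+q^m)\bigl(1+\frac{4q^m}{(1-q^m)^2}\sin^2 x\bigr)^{-1}$, whose geometric expansion in $\sin^2 x$ has $t$-th coefficient exactly the right-hand side. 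Matching coefficients of $(-4)^t\sin^{2t}x$ gives the identity for every $t$ simultaneously, with no base case and no recursion. If you want a proof genuinely independent of Theorem~\ref{T3}, this generating-function route via Jacobi is the one to take.
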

\begin{proof}
\noindent
Consider the expression 
$$\frac{(1-q^m)^2}{1-2q^m\cos(2x)+q^{2m}}=\frac1{1+4\frac{q^m}{(1-q^m)^2}\sin^2x}$$
and recall Jacobi's identity \cite[page 187]{Jacobi} 

\begin{align*}
\prod_{m\geq1}\frac{(1-q^m)^2}{1-2q^m\cos(2x)+q^{2m}}
=\sum_{m\geq1}(-1)^{m-1}\left\{\frac{q^{\binom{m}2}(1-q^m)(1-q^{2m})}{1-2q^m\cos(2x)+q^{2m}}\right\}.
\end{align*}
As a consequence, we may proceed to compute
\begin{align*}
\prod_{m\geq1}\frac1{1+4\frac{q^m}{(1-q^m)^2}\sin^2x}
&=\sum_{m\geq1}\frac{(-1)^{m-1}q^{\binom{m}2}(1-q^m)(1-q^{2m})}{1-2q^m\cos(2x)+q^{2m}} \\
&=\sum_{m\geq1}\frac{(-1)^{m-1}q^{\binom{m}2}(1-q^m)(1-q^{2m})}{(1-q^m)^2+4q^m\sin^2x} \\
&=\sum_{m\geq1}\frac{(-1)^{m-1}q^{\binom{m}2}(1+q^m)}{1+4\frac{q^m}{(1-q^m)^2}\sin^2x} \\
&=\sum_{m\geq1}(-1)^{m-1}q^{\binom{m}2}(1+q^m)\sum_{n\geq0}\frac{(-4)^nq^{nm}\sin^{2n}x}{(1-q^{2m})^{2n}} \\
&=\sum_{n\geq0}(-4)^n\sin^{2n}x\sum_{m\geq1}\frac{(-1)^{m-1}q^{\binom{m}2+nm}(1+q^m)}{(1-q^m)^{2n}}.
\end{align*}
Compare the coefficients in the last double sum against the derivation of equation \eqref{homog} (cf. Section 6 below) in arriving at the conclusion alluded to by the proposition.
\end{proof}

\begin{example} As a further utility of the above identity of Jacobi towards our multiple sums, we list a few formulas. Letting $x=\frac{\pi}2, \frac{\pi}4$ and $\frac{\pi}6$ one obtains, respectively,
\begin{align*}
\prod_{m\geq1}\frac{(1-q^m)^2}{(1+q^m)^2}&=\sum_{m\geq1}(-1)^{m-1}\frac{q^{\binom{m}2}(1-q^m)(1-q^{2m})}{(1+q^m)^2}
=\sum_{n\geq0}\sum_{m_1{\leq}\dots{\leq}m_n<\infty} \prod_{j=1}^n\frac{(-4)q^{m_j}}{(1-q^{m_j})^2}, \\
\prod_{m\geq1}\frac{(1-q^m)^2}{1+q^{2m}}&=\sum_{m\geq1}(-1)^{m-1}\frac{q^{\binom{m}2}(1-q^m)(1-q^{2m})}{1+q^{2m}}
=\sum_{n\geq0}\sum_{ m_1{\leq}\dots{\leq}m_n<\infty} \prod_{j=1}^n\frac{(-2)q^{m_j}}{(1-q^{m_j})^2}, \\
\prod_{m\geq1}\frac{(1-q^m)^2}{1-q^m+q^{2m}}&=\sum_{m\geq1}(-1)^{m-1}\frac{q^{\binom{m}2}(1-q^m)(1-q^{2m})}{1-q^m+q^{2m}}
=\sum_{n\geq0}\sum_{m_1{\leq}\dots{\leq}m_n<\infty} \prod_{j=1}^n\frac{(-1)q^{m_j}}{(1-q^{m_j})^2}.
\end{align*}
Observe that the first of these products generates the number of lattice points stationed on a circle of radius $\sqrt{n}$, for a positive integer $n$.
\end{example}

\section{Another $q$-variant of \eqref{mhsid}}

\noindent
This section offers a slightly different $q$-analogue of \eqref{mhsid} based on an iterated $q$-inverse correspondence between two sequences that are related as such.

\begin{lemma} \label{L1} If a sequence $b_k$ is a $q$-binomial transform of $a_k$, that is,
$$\sum_{k=1}^n (-1)^{k-1}\qbin{n}{k}a_k=b_n$$
then it is true that
$$\sum_{k=1}^n (-1)^{k-1}\qbin{n}{k}\frac{a_k\,q^{tk}}{[z+k]_q^{t}}
=\frac{1}{\qbin{z+n}{n}}\sum_{1\leq k_1\leq \cdots \leq k_{t}\leq n}
\frac{b_{k_1}\qbin{z+k_1}{k_1} q^{k_1+k_2+\dots+k_{t}}}{\prod_{j=1}^t[z+k_j]_q }.$$
\end{lemma}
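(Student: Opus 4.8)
The plan is to transcribe the proof of Lemma~\ref{Master} essentially line by line, inserting the appropriate powers of $q$. First I would recast the hypothesis $\sum_{k=1}^n(-1)^{k-1}\qbin{n}{k}a_k=b_n$ as the infinite matrix equation $\mathbf{M}_q\,\mathbf{a}^T=\mathbf{b}^T$, where $\mathbf{M}_q=\big[(-1)^{j-1}\qbin{i}{j}\big]_{i,j\geq1}$, and introduce the two auxiliary matrices
\[
\mathbf{D}_q=\left[\frac{q^k\qbin{z+k}{k}}{[z+k]_q\,\qbin{z+i}{i}}\,\delta_{i\geq k}\right]_{i,k\geq1}
\qquad\text{and}\qquad
\mathbf{N}_q=\left[\frac{(-1)^{j-1}q^j\qbin{i}{j}}{[z+j]_q}\right]_{i,j\geq1}.
\]
The core claim is the product identity $\mathbf{D}_q\cdot\mathbf{M}_q=\mathbf{N}_q$, which upon expanding entries amounts to the single $q$-binomial summation
\[
\sum_{k=m}^{n}\frac{q^k\qbin{z+k}{k}\qbin{k}{m}}{[z+k]_q}
=\frac{q^m\qbin{z+n}{n}\qbin{n}{m}}{[z+m]_q},\qquad n\geq m\geq 1,
\]
which is the $q$-analogue of the identity $\sum_{k=m}^n\frac{\binom{z+k}{k}}{z+k}\binom{k}{m}=\frac{\binom{z+n}{n}}{m+z}\binom{n}{m}$ used in the proof of Lemma~\ref{Master}.

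To prove this summation I would use creative telescoping. Setting $F(m,k):=\dfrac{q^k\qbin{z+k}{k}\qbin{k}{m}}{[z+k]_q}$ and
\[
G(m,k):=F(m,k)\cdot\frac{q^{m-k}[k-m]_q}{[z+m]_q}
=\frac{q^m\,[k-m]_q\,\qbin{z+k}{k}\qbin{k}{m}}{[z+k]_q\,[z+m]_q},
\]
the telescoping relation $F(m,k)=G(m,k+1)-G(m,k)$ reduces, after cancelling $q$-factorials, to the elementary identity $[z+k]_q-[k-m]_q=q^{k-m}[z+m]_q$. Since $G(m,m)=0$, summing over $k$ from $m$ to $n$ gives $\sum_{k=m}^nF(m,k)=G(m,n+1)$, and the recurrences $\qbin{z+n+1}{n+1}=\tfrac{[z+n+1]_q}{[n+1]_q}\qbin{z+n}{n}$ and $\qbin{n+1}{m}=\tfrac{[n+1]_q}{[n+1-m]_q}\qbin{n}{m}$ collapse $G(m,n+1)$ to the asserted right-hand side. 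The one genuinely new point---and the only real obstacle---is that the naive $q$-analogue $\frac{[k-m]_q}{[z+m]_q}$ of the classical certificate $\frac{k-m}{z+m}$ does not telescope; the corrective monomial $q^{m-k}$ is essential, and locating it is the crux of the argument.

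With $\mathbf{D}_q\mathbf{M}_q=\mathbf{N}_q$ in hand, left-multiplying $\mathbf{b}^T=\mathbf{M}_q\mathbf{a}^T$ by $\mathbf{D}_q$ gives $\mathbf{D}_q\mathbf{b}^T=\mathbf{N}_q\mathbf{a}^T$, and reading off the $n$-th coordinate yields exactly the $t=1$ case:
\[
\sum_{k=1}^n(-1)^{k-1}\qbin{n}{k}\frac{a_k\,q^k}{[z+k]_q}
=\frac{1}{\qbin{z+n}{n}}\sum_{k=1}^n\frac{b_k\,\qbin{z+k}{k}\,q^k}{[z+k]_q}.
\]
Finally I would pass to general $t$ exactly as in Lemma~\ref{Master}: since $c_n:=\sum_{k=1}^n(-1)^{k-1}\qbin{n}{k}\frac{a_kq^k}{[z+k]_q}$ is itself the $q$-binomial transform of $a'_k:=\frac{a_kq^k}{[z+k]_q}$, applying the $t=1$ identity to the pair $(a',c)$ and then substituting the displayed formula for $c_k$---the prefactor $\frac{q^k\qbin{z+k}{k}}{[z+k]_q}$ cancelling the $\frac{1}{\qbin{z+k}{k}}$ sitting inside $c_k$---produces the $t=2$ instance, complete with the product $[z+k_1]_q[z+k_2]_q$, the monomial $q^{k_1+k_2}$, and the nested range $1\leq k_1\leq k_2\leq n$ emerging from the iterated summation. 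Repeating this step $t-1$ times builds up $\prod_{j=1}^t[z+k_j]_q$ together with $q^{k_1+\cdots+k_t}$ and completes the proof.
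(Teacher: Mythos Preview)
Your proof is correct and follows essentially the same route as the paper: both arguments reduce the $t=1$ case to the summation identity $\sum_{k=m}^{n}\frac{q^k\qbin{z+k}{k}\qbin{k}{m}}{[z+k]_q}=\frac{q^m\qbin{z+n}{n}\qbin{n}{m}}{[z+m]_q}$, prove it by a WZ-type telescope, and then iterate. Your matrix packaging (borrowed from Lemma~\ref{Master}) is purely cosmetic---the paper instead substitutes $b_k=\sum_m(-1)^{m-1}\qbin{k}{m}a_m$ into the right-hand side and swaps sums---and in fact your certificate $G(m,k)=F(m,k)\cdot\frac{q^{m-k}[k-m]_q}{[z+m]_q}$ is the one that actually telescopes; the paper's printed factor $q^{k-m}$ appears to be a sign slip in the exponent.
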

\begin{proof} We begin by proving the identity for the case $r=1$, i.e.
\begin{equation}\label{caseone}
\sum_{k=1}^n (-1)^{k-1}\qbin{n}{k}\frac{a_kq^k}{[z+k]_q}=\frac{1}{\qbin{z+n}{n}}\sum_{k=1}^n\qbin{z+k}{k}\frac{b_kq^k}{[z+k]_q}.
\end{equation}
Indeed,
\begin{align*}
\sum_{k=1}^n\qbin{z+k}{k}\frac{b_kq^k}{[z+k]_q}
&=\sum_{k=1}^n\qbin{z+k}{k}\frac{q^k}{[z+k]_q}\sum_{m=1}^k (-1)^{m-1}\qbin{k}{m}a_m\\
&=\sum_{m=1}^n (-1)^{m-1} a_m\sum_{k=m}^n
\qbin{z+k}{k}\qbin{k}{m}\frac{q^k}{[z+k]_q}\\
&=\qbin{z+n}{n}\sum_{m=1}^n (-1)^{m-1} \qbin{n}{m}\frac{a_m \,q^m}{[z+m]_q}
\end{align*}
where in the last step we employed the identity
$$\sum_{k=m}^n
\qbin{z+k}{k}\qbin{k}{m}\frac{q^k}{[z+k]_q}
=\qbin{z+n}{n}\qbin{n}{m}\frac{q^m}{[z+m]_q}$$
which, in turn, is justified by the WZ-method: let  
$$F(m,k):=\qbin{z+k}{k}\qbin{k}{m}\frac{q^k}{[z+k]_q}\quad\text{and}\quad G(m,k):=F(m,k)\cdot \frac{[k-m]_q q^{k-m}}{[z+m]_q},$$ 
then simply check that
$F(m,k)=G(m,k+1)-G(m,k)$. Upon simplification, we obtain
$$\sum_{k=m}^nF(m,k)=G(m,n+1)-G(m,m)=G(m,n+1)=\qbin{z+n}{n}\qbin{n}{m}\frac{q^m}{[z+m]_q}$$
as desired. By applying the above identity \eqref{caseone}, successively, $t$ times finishes the proof.
\end{proof}

\noindent
The next result is a consequence of Lemma \ref{L1} and reproves the identity in \eqref{ATidB} as the special case $z=0$.
\begin{corollary} \label{C1} For all positive integers $n$ and $t$, we have
$$
\sum_{k=1}^n (-1)^{k-1}\qbin{n}{k}\frac{q^{\binom{k}{2}+(x+t)k}}{[z+k]_q^{t}\qbin{x+k}{k}}
=\frac{1}{\qbin{z+n}{n}}\sum_{1\leq k_1\leq \cdots \leq k_{t}\leq n}\frac{[k_1]_q\,q^x\qbin{z+k_1}{k_1}q^{k_1+k_2+\dots+k_{r}}}{[x+k_1]_q\, \prod_{i=1}^t[z+k_i]_q}.
$$
\end{corollary}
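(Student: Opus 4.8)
The plan is to derive Corollary \ref{C1} directly from Lemma \ref{L1} by making the right choice of the input sequence $a_k$ and its $q$-binomial transform $b_k$. Specifically, I would set $a_k = \frac{q^{\binom{k}{2}+xk}}{\qbin{x+k}{k}}$, so that the left-hand side of Lemma \ref{L1}, namely $\sum_{k=1}^n(-1)^{k-1}\qbin{n}{k}\frac{a_k q^{tk}}{[z+k]_q^t}$, becomes exactly $\sum_{k=1}^n(-1)^{k-1}\qbin{n}{k}\frac{q^{\binom{k}{2}+(x+t)k}}{[z+k]_q^t \qbin{x+k}{k}}$, the left side of the Corollary. The only thing left is to identify $b_k = \sum_{m=1}^k(-1)^{m-1}\qbin{k}{m}a_m$ in closed form and check it produces the factor $[k_1]_q q^x / [x+k_1]_q$ appearing on the right.

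The key computation is therefore the evaluation
$$b_k = \sum_{m=1}^k(-1)^{m-1}\qbin{k}{m}\frac{q^{\binom{m}{2}+xm}}{\qbin{x+m}{m}}.$$
I expect $b_k = \frac{[k]_q\, q^{x}}{[x+k]_q}\cdot q^{\binom{k}{2}}$ — or some closely related clean form; matching it against the target right-hand side of Corollary \ref{C1}, whose summand carries $[k_1]_q q^x \qbin{z+k_1}{k_1} q^{k_1+\cdots+k_t}/([x+k_1]_q\prod_i[z+k_i]_q)$, forces $b_{k_1}$ to equal $[k_1]_q q^x q^{\binom{k_1}{2}}/[x+k_1]_q$ once one remembers that Lemma \ref{L1}'s output already supplies the $q^{k_1+\cdots+k_t}$ weight and the $\qbin{z+k_1}{k_1}/\prod[z+k_j]_q$ factor. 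Wait — I should double-check the power of $q$: there is no leftover $q^{\binom{k_1}{2}}$ on the right side of the Corollary, so in fact what is needed is $b_k = [k]_q q^x/[x+k]_q$, which means the $q^{\binom{k}{2}}$ in $a_k$ must be there precisely to make the alternating $q$-binomial sum collapse without residual $q^{\binom{k}{2}}$. In any case, this identity $\sum_{m=1}^k(-1)^{m-1}\qbin{k}{m}q^{\binom{m}{2}+xm}/\qbin{x+m}{m} = [k]_q q^x/[x+k]_q$ (up to a sign/shift I'd fix while computing) is the $q$-analogue of the identity $\sum_{k=1}^n(-1)^{k-1}\binom{n}{k}/\binom{x+k}{k}=n/(x+n)$ used in the proof of the first Corollary, and it can be proved the same way, by a WZ pair or by telescoping.

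So the concrete steps, in order, are: (i) state the auxiliary identity for $b_k$ and prove it via a $q$-WZ pair $F(k,m)=(-1)^{m-1}\qbin{k}{m}q^{\binom{m}{2}+xm}/\qbin{x+m}{m}$, $G(k,m)=(\text{rational in }q^k,q^m,q^x)\cdot F(k,m)$, checking $F(k,m)-F(k-1,m)=G(k,m+1)-G(k,m)$ and summing; (ii) substitute this $a_k$, $b_k$ pair into Lemma \ref{L1}; (iii) simplify the resulting right-hand side, absorbing the constant/power-of-$q$ factors from $b_{k_1}$ to match the stated summand; (iv) note that $z=0$ recovers \eqref{ATidB} since $\qbin{n}{n}=1$ and $[0+k_i]_q=[k_i]_q$, and the extra $q^{k_1+\cdots+k_t}$ combined with $q^x$ on $k_1$ reproduces $q^{(x+k_1)+k_2+\cdots+k_t}$ — though one must be careful that \eqref{ATidB} has $2t$ summation indices while this Corollary has $t$, so the match is really with \eqref{ATidB} under $t \mapsto 2t$ together with an additional doubling step, or else the cross-reference is to a differently-indexed form; I would verify this indexing carefully. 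The main obstacle is purely the bookkeeping of powers of $q$ in steps (i) and (iii): getting the exponent $\binom{k}{2}+(x+t)k$ to come out exactly, and confirming that the $q^{\binom{k_1}{2}}$-type terms cancel rather than survive, requires careful tracking through the $t$-fold iteration of \eqref{caseone}, since each application of \eqref{caseone} multiplies in a factor $q^{k_j}$ and one $[z+k_j]_q$ denominator and the exponents must telescope correctly across all $t$ rounds.
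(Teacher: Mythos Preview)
Your proposal is correct and follows essentially the same approach as the paper: choose $a_k=\dfrac{q^{\binom{k}{2}+xk}}{\qbin{x+k}{k}}$, establish via a $q$-WZ pair that $b_k=\dfrac{[k]_q\,q^x}{[x+k]_q}$ (your second guess is the right one --- no residual $q^{\binom{k}{2}}$), and then feed this pair into Lemma~\ref{L1}. The paper even writes down the explicit WZ certificate $G(n,k)=-F(n,k)\dfrac{[x+k]_q[k-1]_q}{[x+n]_q[n+1-k]_q}\,q^{n+1-k}$ for the relation $F(n+1,k)-F(n,k)=G(n,k+1)-G(n,k)$, and your worry about the link to \eqref{ATidB} is just a relabeling: setting $z=0$ in the Corollary and replacing $t$ by $2t$ recovers \eqref{ATidB} verbatim.
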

\begin{proof} Consider the sequences 
$$a_k=\frac{q^{\binom{k}{2}+xk}}{\qbin{x+k}{k}}\quad\text{and}\quad b_k=\frac{[k]_q \,q^x}{[x+k]_q}$$
then check that these sequences fulfill the condition of Lemma \ref{L1}: %(see also \cite[(4)]{MSS} and \cite[(1.6)]{X})
$$\sum_{k=1}^n (-1)^{k-1}\qbin{n}{k}a_k=b_n.$$
This, however, is provable by the WZ-pair
$$F(n,k)=(-1)^{k-1}\qbin{n}{k}\frac{a_k}{b_n}\quad\text{and}\quad 
G(n,k)=-F(n,k)\frac{[x+k]_q[k-1]_q }{[x+n]_q[n+1-k]_q}q^{n+1-k}$$
exhibiting the property that $F(n+1,k)-F(n,k)=G(n,k+1)-G(n,k)$. To complete the proof apply Lemma \ref{L1} and simplify the terms.
\end{proof}

\noindent
Moreover, applying Lemma \ref{L1} we are able to recover identity (\ref{MSSid}) and hence equation (\ref{Did}) of Dilcher.

\begin{corollary} For positive integers $n$ and $t$, we have
\begin{equation}
\sum_{k=1}^n \frac{(-1)^{k-1}\qbin{n}{k}[k]_q\,q^{\binom{k}{2}+tk}}{[z+k]_q^{t+1}}
=\frac{1}{\qbin{z+n}{n}}\sum_{1\leq k_1\leq \cdots \leq k_{t}\leq n}\frac{q^{k_1+k_2+\dots+k_{t}}}{\prod_{j=1}^t[z+k_j]_q}.
\end{equation}
\end{corollary}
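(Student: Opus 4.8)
The plan is to derive this corollary from Lemma~\ref{L1} by the same device used for Corollary~\ref{C1}, namely by specializing the pair $(a_k,b_k)$ appropriately. Here one wants the left-hand side to exhibit a factor $[k]_q\,q^{\binom k2+tk}/[z+k]_q^{t+1}$, which matches the shape $(-1)^{k-1}\qbin nk\,a_k q^{tk}/[z+k]_q^t$ from the conclusion of Lemma~\ref{L1} precisely when we take, after pulling out one extra factor of $1/[z+k]_q$,
\begin{equation*}
a_k=\frac{[k]_q\,q^{\binom k2}}{[z+k]_q}\quad\text{or, more cleanly, reorganize so that}\quad a_k=[k]_q\,q^{\binom k2}.
\end{equation*}
The latter choice does not quite fit Lemma~\ref{L1} verbatim because of the surplus $[z+k]_q$ in the denominator; so the cleaner route is to apply Lemma~\ref{L1} with $t+1$ in place of $t$ and with $a_k$ chosen so that the transform $b_k$ is the constant sequence $b_k\equiv 1$. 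Concretely, first I would record the elementary $q$-binomial fact
\begin{equation*}
\sum_{k=1}^n(-1)^{k-1}\qbin nk\,[k]_q\,q^{\binom k2}=q^{\binom n2}\cdot[\,\text{something}\,]
\end{equation*}
— i.e. identify the sequence $a_k$ whose inverse $q$-binomial transform is identically $1$. By the $q$-inverse pair formula quoted in Section~2 ($b_n=\sum_k(-1)^{k-1}\qbin nk a_k\Leftrightarrow a_n=\sum_k(-1)^{k-1}q^{\binom{n-k}2}\qbin nk b_k$), taking $b_k\equiv1$ gives $a_n=\sum_{k=1}^n(-1)^{k-1}q^{\binom{n-k}2}\qbin nk$, which is a standard finite $q$-series that telescopes to $q^{\binom n2}$ (equivalently $a_n=[n]_q\,q^{\binom n2}/[1]_q$ after the usual simplification, up to a harmless normalization).

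Once the correct pair $(a_k,b_k)$ with $b_k\equiv1$ is in hand, the main step is a direct substitution into Lemma~\ref{L1}. With $b_{k_1}=1$ the right-hand side of Lemma~\ref{L1} collapses to
\begin{equation*}
\frac{1}{\qbin{z+n}{n}}\sum_{1\leq k_1\leq\cdots\leq k_t\leq n}\frac{\qbin{z+k_1}{k_1}\,q^{k_1+\cdots+k_t}}{\prod_{j=1}^t[z+k_j]_q},
\end{equation*}
which is almost the claimed right-hand side except for the stray factor $\qbin{z+k_1}{k_1}$. To kill that factor I would instead run Lemma~\ref{L1} in the slightly shifted form where the summand carries one extra $1/[z+k]_q$: that is, note that $\qbin{z+k_1}{k_1}/[z+k_1]_q = \qbin{z+k_1-1}{k_1}/[z]_q$ type manipulations do \emph{not} immediately help, so the better organization is to absorb the $\qbin{z+k_1}{k_1}$ by choosing $a_k$ to include a compensating $1/\qbin{z+k}{k}$-style factor — exactly paralleling Corollary~\ref{C1}, where the presence of $1/\qbin{x+k}{k}$ in $a_k$ is what ultimately removes the analogous obstruction. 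The honest statement of the plan: mimic the proof of Corollary~\ref{C1} line for line, but with $x$ replaced by $z$ and with the simpler transform $b_k=[k]_q q^{?}/[z+k]_q$ degenerating so that the $q^x$ and $[x+k]_q$ factors disappear.

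The genuine obstacle, then, is bookkeeping rather than conceptual: pinning down the exact powers of $q$ (the $q^{\binom k2+tk}$ on the left versus $q^{k_1+\cdots+k_t}$ on the right) and verifying that the WZ certificate $G(n,k)$ establishing $\sum_k(-1)^{k-1}\qbin nk a_k=b_n$ really does telescope for the chosen $a_k$. I expect the cleanest presentation is: set $a_k=[k]_q q^{\binom k2}$, $b_k=[n]_q$... no — set $a_k$ and $b_k$ so that Lemma~\ref{L1} with parameter $t+1$ and the substitution $a_k\mapsto a_k/[z+k]_q$ yields exactly the factor $[k]_q q^{\binom k2+tk}/[z+k]_q^{t+1}$ on the left, while the vanishing of the extra $q$-binomial on the right is forced by the identity $\sum_{k=1}^n(-1)^{k-1}q^{\binom{n-k}2}\qbin nk = q^{\binom n2}$. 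I would state that auxiliary identity as a one-line lemma (provable by a trivial WZ pair or by induction on $n$), plug everything into Lemma~\ref{L1}, and simplify. The final sanity check is the case $z=0$, which must reproduce Dilcher's \eqref{Did} and the $x=0$ case of \eqref{MSSid} — this is both a guide to getting the exponents right and the promised corollary-of-a-corollary.
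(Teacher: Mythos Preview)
Your proposal does not land on a working argument. The correct application of Lemma~\ref{L1} is the one you wrote down first and then abandoned: take
\[
a_k=\frac{[k]_q\,q^{\binom{k}{2}}}{[z+k]_q}.
\]
What you are missing is the companion sequence. The paper takes $b_k=1/\qbin{z+k}{k}$ and verifies $\sum_{k=1}^n(-1)^{k-1}\qbin{n}{k}a_k=b_n$ by an explicit WZ pair. With this $b_k$, the factor $b_{k_1}\qbin{z+k_1}{k_1}$ on the right-hand side of Lemma~\ref{L1} is identically $1$, which is exactly what kills the ``stray factor'' you were worried about; no further manipulation is needed. Plugging $a_k$ into the left-hand side of Lemma~\ref{L1} gives $a_k q^{tk}/[z+k]_q^t=[k]_q q^{\binom{k}{2}+tk}/[z+k]_q^{t+1}$, matching the target immediately.

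Your alternative route via $b_k\equiv 1$ cannot work as stated. The inverse transform of the constant sequence is $a_n=q^{\binom{n}{2}}$ (your computation of this is correct), but then the left-hand side of Lemma~\ref{L1} with $t+1$ in place of $t$ becomes
\[
\sum_{k=1}^n(-1)^{k-1}\qbin{n}{k}\frac{q^{\binom{k}{2}+(t+1)k}}{[z+k]_q^{t+1}},
\]
which carries $q^{(t+1)k}$ rather than $[k]_q\,q^{tk}$; these are genuinely different and no amount of index-shifting repairs it. Moreover, with $b_{k_1}=1$ the right-hand side still carries the uncancelled $\qbin{z+k_1}{k_1}$, so you end up with neither side matching the corollary. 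The entire difficulty you describe (``absorb the $\qbin{z+k_1}{k_1}$'', ``run in a shifted form'', etc.) evaporates once you recognize that $b_k$ should be chosen to \emph{be} the reciprocal of that factor.
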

\begin{proof} Use $a_k=\frac{[k]_q\,q^{\binom{k}2}}{[z+k]_q}$ and $b_k=\frac1{\qbin{z+k}{k}}$. Then, the required hypothesis
$$\sum_{k=1}^n (-1)^{k-1}\qbin{n}{k}a_k=b_n$$
is justified through the WZ-pair
$$F(n,k)=(-1)^{k-1}\qbin{n}{k}\frac{a_k}{b_n}\quad\text{and}\quad 
G(n,k)=-F(n,k)\frac{[z+k]_q[k-1]_q }{[n]_q[n+1-k]_q}q^{n+1-k}.$$
The assertion follows after a direct application of Lemma \ref{L1}.  
\end{proof}

\section{Symmetric functions and quasi-modularity}

\noindent
The sum of divisors of a positive integer $n$ is defined as $\sigma_1(n)=\sum_{d\,\vert\, n}d$ and it can be generated by
$$U_1(q)=\sum_{n\geq1}\sigma_1(n)\,q^n=\sum_{k\geq1}\frac{q^k}{(1-q^k)^2}.$$ 
MacMahon \cite[pages 75, 77]{MacMahon} generalized this notion by introducing 
\begin{align*}
U_t(q):&=\sum_{1\leq k_1<\cdots<k_t<\infty}\frac{q^{k_1+\cdots+k_t}}{(1-q^{k_1})^2\cdots(1-q^{k_t})^2} \end{align*}
and interpreting the series as follows: define the sum $b_{n,t}=\sum s_1\cdots s_t$ where the sum is taken over all possible ways of writing $n=s_1k_1+\cdots+s_tk_t$ while 
$1\leq k_1<k_2<\cdots<k_t$. This way, $U_t(q)=\sum_{n\geq1}b_{n,t}\,q^n$ and in particular $b_{n,1}=\sigma_1(n)$.

\smallskip
\noindent
In \cite{MacMahon}, MacMahon developed several properties and concepts that are interwoven with each other. In the same spirit, we like to investigate the infinite series from Corollary \ref{tranMac}:
\begin{align*}
V_t(q):&=\sum_{1\leq k_1\pmb{\leq}\cdots\pmb{\leq} k_t<\infty}\frac{q^{k_1+\cdots+k_t}}{(1-q^{k_1})^2\cdots(1-q^{k_t})^2}.
\end{align*}
We remind the reader that $V_t(q)$ results from letting $n\rightarrow\infty$ in the function $F_t(n)$ of Theorem \ref{T3}. Next, consider the modest case $t=2$ to record an immediate observation.

\begin{proposition} \label{V2U2} We have a generating function
$$V_2(q)-U_2(q)=\frac16\sum_{j\geq1}\left(\sum_{d\,\vert\, j}d^3-\sum_{d\,\vert\, j}d\right)\,q^j$$
for an excess in the sum of divisors.
\end{proposition}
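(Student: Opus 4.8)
The plan is to compute $V_2(q)-U_2(q)$ directly from the definitions and recognize the difference as the ``diagonal'' contribution $k_1=k_2$. Since
$$V_2(q)=\sum_{1\leq k_1\leq k_2<\infty}\frac{q^{k_1+k_2}}{(1-q^{k_1})^2(1-q^{k_2})^2},\qquad
U_2(q)=\sum_{1\leq k_1<k_2<\infty}\frac{q^{k_1+k_2}}{(1-q^{k_1})^2(1-q^{k_2})^2},$$
the only terms of $V_2$ not already present in $U_2$ are those with $k_1=k_2=:k$, so
$$V_2(q)-U_2(q)=\sum_{k\geq1}\frac{q^{2k}}{(1-q^k)^4}.$$
Thus the whole proposition reduces to the single-sum identity
$$\sum_{k\geq1}\frac{q^{2k}}{(1-q^k)^4}=\frac16\sum_{j\geq1}\Bigl(\sigma_3(j)-\sigma_1(j)\Bigr)q^j.$$

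To establish this, I would expand each summand as a power series in $q$. Using $\dfrac{q^{2k}}{(1-q^k)^4}=\sum_{m\geq0}\binom{m+3}{3}q^{k(m+2)}=\sum_{s\geq2}\binom{s+1}{3}q^{ks}$, interchanging summations gives
$$\sum_{k\geq1}\frac{q^{2k}}{(1-q^k)^4}=\sum_{j\geq1}q^j\sum_{\substack{k\mid j\\ s=j/k\geq2}}\binom{s+1}{3}=\sum_{j\geq1}q^j\sum_{\substack{d\mid j\\ d\geq2}}\binom{d+1}{3},$$
after writing $d=j/k$. It then remains to check the elementary divisor-sum identity $\sum_{d\mid j}\binom{d+1}{3}=\tfrac16(\sigma_3(j)-\sigma_1(j))$; indeed $\binom{d+1}{3}=\tfrac{(d-1)d(d+1)}{6}=\tfrac{d^3-d}{6}$, and the $d=1$ term contributes $0$, so summing over $d\mid j$ yields exactly $\tfrac16\sum_{d\mid j}(d^3-d)=\tfrac16(\sigma_3(j)-\sigma_1(j))$. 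Collecting these steps proves the claim.

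The main (and only mild) obstacle is bookkeeping with the binomial-coefficient expansion: one must be careful that the shift $s\geq2$ in the geometric-type expansion of $(1-q^k)^{-4}$ is exactly what kills the $d=1$ term and thereby produces $\sigma_3-\sigma_1$ rather than something with a stray constant. Everything else is routine interchange of absolutely convergent series (valid as formal power series, or for $|q|<1$) and the triangular-number factorization $\binom{d+1}{3}=(d^3-d)/6$. An alternative, essentially equivalent route is to note $\dfrac{q^{2k}}{(1-q^k)^4}=\dfrac16\Bigl(\dfrac{?}{}\Bigr)$ via the operator identity $\bigl(q\frac{d}{dq}\bigr)$ applied to $\dfrac{q^k}{1-q^k}$, extracting $\sum_d d^3$ from third derivatives and $\sum_d d$ from the first, but the direct binomial expansion above is cleanest.
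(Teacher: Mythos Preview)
Your proof is correct and follows essentially the same route as the paper: identify $V_2-U_2$ as the diagonal sum $\sum_{k\ge1}q^{2k}/(1-q^k)^4$, expand with the binomial series $\sum_{s\ge2}\binom{s+1}{3}q^{ks}$, swap sums, and use $\binom{d+1}{3}=(d^3-d)/6$ (with the $d=1$ term vanishing). The only cosmetic difference is that the paper writes the extension from $s\ge2$ to $s\ge1$ explicitly before collecting divisors, whereas you carry the constraint $d\ge2$ and then observe it is harmless.
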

\begin{proof}  Since $V_2(q)-U_2(q)=\sum_{k\geq1}\frac{q^{2k}}{(1-q^k)^4}$,  it suffices to expand the right-hand side using the geometric series to the effect that
\begin{align*}
\sum_{k\geq1}\frac{q^{2k}}{(1-q^k)^4}&=\sum_{k\geq1}q^{2k}\sum_{n\geq0}\binom{n+3}3\,q^{kn}=\sum_{\substack{k\geq1 \\ m\geq2}}q^{km}\binom{m+1}3
=\sum_{\substack{k\geq1 \\ m\geq1}}q^{km}\binom{m+1}3 \\
&=\sum_{N\geq1}q^N\sum_{d\,\vert\, N}\binom{d+1}3=\frac16\sum_{N\geq1}q^N\sum_{d\,\vert\, N}(d^3-d).
\end{align*}
The proof is complete. \end{proof}

\noindent
MacMahon \cite{MacMahon} also worked out the expansion of the product $\prod_{m=1}^{\infty}(1+4\frac{q^m}{(1-q^m)^2}\sin^2(x))$. For notational simplicity, consider the following product which we expand to obtain
\begin{align*}
\prod_{m=1}^{\infty}(1+a_mX)&=\sum_{t\geq0}\left(\sum_{1\leq m_1<m_2<\dots<m_t<\infty}a_{m_1}a_{m_2}\cdots a_{m_t}\right)X^t \\
&=\sum_{t\geq0}e_t(a_1,a_2,a_3,\dots)\,X^t
\end{align*}
where $e_t(a_1,a_2,\dots)$ is the {\em $t^{th}$-elementary symmetric function} in infinitely many variables. Therefore, if we replace $a_m=\frac{q^m}{(1-q^m)^2}$ and $X=4\sin^2(x)$ then
$$e_t(a_1,a_2,a_3,\dots)=\sum_{1\leq m_1<m_2<\dots<m_t<\infty}\prod_{j=1}^t\frac{q^{m_j}}{(1-q^{m_j})^2}=U_t(q)$$
and also that $X^t=4^t\sin^{2t}x$.

\smallskip
\noindent
In the same vain, expand the product
\begin{align*}
\prod_{m=1}^{\infty}\frac1{1-a_mX}&=\sum_{t\geq0}\left(\sum_{1\leq m_1{\leq} m_2{\leq}\dots{\leq}m_t<\infty}
a_{m_1}a_{m_2}\cdots a_{m_t}\right)X^t \\
&=\sum_{t\geq0} h_t(a_1,a_2,a_3,\dots)\, X^t
\end{align*}
so that $h_t(a_1,a_2,\dots)$ is the {\em $t^{th}$-complete homogeneous symmetric function} in infinitely many variables. Once more, if we choose $a_m=\frac{q^m}{(1-q^m)^2}$ and $X=-4\sin^2x$ then
\begin{equation} \label{homog}
h_t(a_1,a_2,a_3,\dots)=\sum_{1\leq m_1{\leq} m_2{\leq}\dots{\leq}m_t<\infty} \prod_{j=1}^t\frac{q^{m_j}}{(1-q^{m_j})^2}=V_t(q).
\end{equation}

\smallskip
\noindent
\begin{remark} With this set-up, it is possible to relate MacMahon's $U_t(q)$ with our $V_t(q)$, thanks to the below result well-known in the theory of symmetric functions.
\end{remark}

\begin{lemma} \label{ele-hom} For $t\geq1$, there is the identity
$$\sum_{i=0}^t(-1)^ie_i\,h_{t-i}=0.$$
\end{lemma}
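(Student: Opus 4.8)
The plan is to establish the identity $\sum_{i=0}^t(-1)^ie_ih_{t-i}=0$ for all $t\geq1$ via generating functions, which is the cleanest route since we have already written both $e_t$ and $h_t$ as coefficients of explicit infinite products. Specifically, with $a_m=\frac{q^m}{(1-q^m)^2}$ (or indeed with $a_m$ arbitrary formal variables), recall from the discussion preceding \eqref{homog} that
\begin{align*}
E(X):=\prod_{m\geq1}(1+a_mX)=\sum_{i\geq0}e_i\,X^i
\qquad\text{and}\qquad
H(X):=\prod_{m\geq1}\frac{1}{1-a_mX}=\sum_{j\geq0}h_j\,X^j.
\end{align*}
The first step is to observe the obvious product relation $E(-X)\cdot H(X)=\prod_{m\geq1}\frac{1-a_mX}{1-a_mX}=1$, where the telescoping of the product is legitimate as a formal power series identity in $X$ (each coefficient of $X^N$ involves only finitely many of the $a_m$, since $a_m$ carries a factor $q^m$ and we work in $\mathbb{Z}[[q]][[X]]$; alternatively one treats $a_1,a_2,\dots$ as independent indeterminates and passes to the inverse limit of symmetric function rings).

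The second step is simply to read off coefficients. Since $E(-X)=\sum_{i\geq0}(-1)^ie_iX^i$, multiplying by $H(X)=\sum_{j\geq0}h_jX^j$ gives
\begin{align*}
1=E(-X)H(X)=\sum_{N\geq0}\left(\sum_{i=0}^N(-1)^ie_ih_{N-i}\right)X^N.
\end{align*}
Comparing the coefficient of $X^t$ on both sides for $t\geq1$ yields $\sum_{i=0}^t(-1)^ie_ih_{t-i}=0$, with the $t=0$ coefficient giving the trivial $e_0h_0=1$. This completes the argument.

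There is essentially no obstacle here; the identity is one of the standard Newton-type relations among elementary and complete homogeneous symmetric functions, and the only point requiring a word of care is the justification that the formal product $E(-X)H(X)$ telescopes — which, as noted, is immediate either in $\mathbb{Z}[[q]][[X]]$ after the substitution $a_m=\frac{q^m}{(1-q^m)^2}$ or in the ring of symmetric functions in infinitely many variables. If one instead wanted a self-contained combinatorial proof, the alternative would be to expand the defining sums for $e_i$ and $h_{t-i}$, collect the coefficient of a fixed monomial $a_{m_1}\cdots a_{m_k}$ (with multiplicities), and check that the signed count over compositions into a "strictly increasing" piece and a "weakly increasing" piece cancels; but the generating-function argument is shorter and is the one I would present.
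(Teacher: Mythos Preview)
Your proof is correct: the generating-function identity $E(-X)H(X)=1$ is the standard route, and the remark about working in $\mathbb{Z}[[q]][[X]]$ (or in the ring of symmetric functions) adequately handles the formal telescoping. Note that the paper does not actually supply a proof of this lemma at all; it is stated without argument, preceded only by a remark calling it ``well-known in the theory of symmetric functions.'' So there is nothing to compare against, and your write-up in fact fills a gap the paper left intentionally open.
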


\smallskip
\noindent
As an immediate consequence, we garner the following interesting property.

\begin{theorem} \label{quasi1} The functions $V_t(q)$ belong to the ring of quasi-modular forms of weight at most $2t$ for some congruence subgroup $\Gamma$ of $SL_2(\mathbb{Z})$.
\end{theorem}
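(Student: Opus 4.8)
The plan is to derive the quasi-modularity of each $V_t(q)$ from the known quasi-modularity of MacMahon's $U_t(q)$ (established via \eqref{Mac2}, since the $\mathbf{J}_s$ are, up to normalization, derivatives of the classical Eisenstein-type series, hence quasi-modular of weight $s+1$), together with the purely algebraic relation between the elementary and complete homogeneous symmetric functions recalled in Lemma \ref{ele-hom}. The key point is that the quasi-modular forms form a \emph{ring} (in fact a graded ring, filtered by weight), so any polynomial in the $U_i(q)$ lands back in that ring, with weight controlled by the weights of the factors.

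First I would record that $U_0(q)=1$ and, for $i\geq 1$, $U_i(q)=e_i\big(\tfrac{q^m}{(1-q^m)^2}\big)$ is quasi-modular of weight at most $2i$; this is exactly MacMahon's evaluation \eqref{Mac2} together with the fact that $\mathbf{J}_s$ is (a constant times $q\frac{d}{dq}$ applied $\lfloor (s-1)/2\rfloor$ times to, or more directly a linear combination of derivatives of) a weight-one theta-type series, so $\frac{1}{\mathbf J_1}\mathbf J(\mathbf J^2-1)\cdots(\mathbf J^2-(2t-1)^2)$ umbrally evaluated is a quasi-modular form of weight $2t$; alternatively one may cite \cite{Andrews-Rose} for the quasi-modularity of the $U_t$. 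Next I would invoke Lemma \ref{ele-hom} in the form $h_t=\sum_{i=1}^{t}(-1)^{i-1}e_i\,h_{t-i}$, i.e.
\begin{equation*}
V_t(q)=\sum_{i=1}^{t}(-1)^{i-1}U_i(q)\,V_{t-i}(q),
\end{equation*}
with $V_0(q)=1$. This is a genuine recursion expressing $V_t$ as an explicit polynomial (with integer coefficients) in $U_1,\dots,U_t$. By induction on $t$: $V_0=1$ is quasi-modular of weight $0$, and if each of $V_0,\dots,V_{t-1}$ is quasi-modular with $V_{t-i}$ of weight at most $2(t-i)$, then each summand $U_i(q)V_{t-i}(q)$ is a product of quasi-modular forms of weights at most $2i$ and $2(t-i)$, hence quasi-modular of weight at most $2t$; summing finitely many such terms stays in the same space. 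Thus $V_t(q)$ is quasi-modular of weight at most $2t$ for the same congruence subgroup $\Gamma$ on which all the $U_i$ (equivalently, the underlying theta/Eisenstein series) are quasi-modular — one may take $\Gamma=\Gamma_0(2)$ or any fixed group handling all the $\mathbf J_s$ simultaneously, since only finitely many indices $i\le t$ occur.

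The main obstacle — really the only substantive point beyond bookkeeping — is pinning down the quasi-modularity of the $U_i(q)$ themselves and the precise congruence subgroup: MacMahon's umbral formula \eqref{Mac2} must be converted into an honest statement that each $U_i$ lies in the ring of quasi-modular forms of weight $\le 2i$ on a fixed $\Gamma$. I would handle this by noting that $\mathbf J_s = \sum_{m\ge 0}(-1)^m(2m+1)^s q^{\binom{m+1}{2}}$ is, up to a monomial prefactor and a change of variable $q\mapsto q^{?}$, obtained from $\vartheta$-series by applying the differential operator $q\frac{d}{dq}$ (which raises weight by $2$ and preserves quasi-modularity); hence the umbral polynomial of degree $2t$ in $\mathbf J$, divided by $\mathbf J_1$, is quasi-modular of weight $2t$. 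Once this is in hand the rest is the elementary-symmetric/homogeneous identity and an induction, so no hard estimates or delicate analysis are needed; the proof is essentially "ring closure plus a Newton-type recursion."
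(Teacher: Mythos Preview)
Your proposal is correct and follows essentially the same route as the paper: use Lemma~\ref{ele-hom} to write $V_t(q)=\sum_{i=1}^{t}(-1)^{i-1}U_i(q)V_{t-i}(q)$, then induct using the ring structure of quasi-modular forms and the known quasi-modularity of the $U_i$. The paper simply cites \cite{Andrews-Rose} or \cite{Rose} for the quasi-modularity of $U_i$ rather than attempting to extract it directly from \eqref{Mac2} and the $\mathbf{J}_s$; your sketch of that extraction is somewhat loose (e.g., the precise relation between $\mathbf{J}_s$ and $\vartheta$-derivatives, and the division by $\mathbf{J}_1$), so in writing it up you should rely on the citation as you yourself suggest.
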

\begin{proof} We already noted that $h_t=V_t(q)$ and $e_t=U_t(q)$, so the recurrence relation from Lemma \ref{ele-hom} extracts
$$V_t(q)=\sum_{i=1}^t(-1)^{i-1}U_i(q)V_{t-i}(q).$$
Then, invoke either \cite[Corollary 4]{Andrews-Rose} or \cite[Theorem 1.12]{Rose} to utilize quasi-modularity of the $U_i(q)$'s. The argument is now settled.
\end{proof}

\begin{remark} While the $V_t(q)$ are $q$-series in the ring generated by quasi-modular forms, we emphasize that none are genuine quasi-modular forms of weight $2t$. Instead, they all are linear combinations of quasi-modular forms with weights $\leq 2t$. Indeed, we have $E_2=1-24\sum_{n\geq 0}\sigma_1(n)q^n$ is a weight $2$ quasi-modular form \cite{Zagier} and $E_0=1$ is a weight $0$ modular form, and so we have that
$V_1(q)=\frac{E_0(q)-E_2(q)}{24}$. Namely, it is a sum of a weight $0$ modular form and a weight $2$ quasi-modular form.
\end{remark}

\begin{example} \label{useful_ex} As an illustration of Lemma \ref{ele-hom}, look at the next two recurrence relations for the functions $V_t(q)$:
\begin{align*}
V_2(q)&=\frac1{10}\left([7V_1(q)-1]V_1(q)+q\frac{d}{dq}V_1(q)\right), \\
V_3(q)&=\frac1{21}\left([19V_1(q)-3]V_2(q)-4V_1(q)^3+V_1(q)^2+q\frac{d}{dq}V_2(q)\right).
\end{align*}
Note that $V_1(q)$ \rm{(}\it and hence each $V_t(q)$\rm{)} \it is expressible as a combination of the quasi-modular form $E_2(q)=1-24\sum_{n\geq1}\sigma_1(n)q^n$, that is, $V_1(q)=\frac{1-E_2(q)}{24}$ \rm{(}see \cite{Zagier}\rm{).}
\end{example}

\section{Elliptic functions and umbral expansions}

\noindent
In expressing his arithmetical series by means of elliptic functions, MacMahon \cite[p. 76]{MacMahon} made an effective use of 
$$\mathbf{J}_t(q)=\sum_{m\geq0}(-1)^m(2m+1)^tq^{\binom{m+1}2}$$ 
while Ramanujan \cite[equation (9)]{Rama} toyed with the \emph{Lambert series} 
$$\mathbf{S}_t(q)=\sum_{m\geq1}\frac{m^tq^m}{1-q^m}$$ 
(we dropped the zeta function from Ramanujan's definition).
%Here are a few examples of possible expansions in terms of these elliptic functions
%\begin{align*}
%U_1(q)&=-\frac{\mathbf{J}_3-\mathbf{J}_1}{2^2\,3!\,\mathbf{J}_1}, \qquad V_1(q)=\mathbf{S}_1, \qquad
%U_2(q)=\frac{\mathbf{J}_5-(1^2+3^2)\mathbf{J}_3+1^23^2\mathbf{J}_1}{2^4\,5!\,\mathbf{J}_1}, \\
%V_2(q)&=\frac{\mathbf{J}_5-(1^2+3^2)\mathbf{J}_3+1^23^2\mathbf{J}_1}{2^4\,5!\,\mathbf{J}_1}+\frac{\mathbf{S}_3-\mathbf{S}_1}{3!}.
%\end{align*}

%Notice, too, that $U_1(q)=V_1(q)$.

\smallskip
\noindent
Introduce $\mathbf{G}_t(q)=\sum_{m\geq1}\frac{q^{tm}}{(1-q^m)^{2t}}$ and let $\sfk{n}{k}$ denote the (\emph{unsigned}) \emph{Stirling numbers of the first kind}. We are set to register the following property.

\begin{proposition} \label{Stir} It holds that
\begin{align} \label{Stir-eqn}
\mathbf{G}_t(q)&=\frac1{(2t-1)!}\sum_{k=0}^{t-1}(-1)^k\left(\sum_{j=-k}^k(-1)^j 
\sfk{t}{t-k+j} \cdot \sfk{t}{t-k-j}\right)\mathbf{S}_{2t-1-2k}(q).
\end{align}
\end{proposition}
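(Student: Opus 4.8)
The plan is to establish the identity by expanding $\mathbf{G}_t(q)$ and $\mathbf{S}_{2t-1-2k}(q)$ as Lambert-type series in a common basis and then matching coefficients. First I would write $\mathbf{G}_t(q)=\sum_{m\geq1}\frac{q^{tm}}{(1-q^m)^{2t}}=\sum_{m\geq1}\sum_{\ell\geq0}\binom{\ell+2t-1}{2t-1}q^{m(t+\ell)}=\sum_{m\geq1}\sum_{r\geq t}\binom{r+t-1}{2t-1}q^{mr}$, so that collecting by the total power $N=mr$ gives $\mathbf{G}_t(q)=\sum_{N\geq1}q^N\sum_{d\mid N}\binom{d+t-1}{2t-1}$. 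Similarly $\mathbf{S}_s(q)=\sum_{m\geq1}\frac{m^sq^m}{1-q^m}=\sum_{N\geq1}q^N\sum_{d\mid N}d^s$. Hence, after clearing the common factor and summing over divisors, the proposition reduces to the purely polynomial identity in a single variable $d$:
\begin{equation*}
\binom{d+t-1}{2t-1}=\frac1{(2t-1)!}\sum_{k=0}^{t-1}(-1)^k\left(\sum_{j=-k}^k(-1)^j\sfk{t}{t-k+j}\sfk{t}{t-k-j}\right)d^{\,2t-1-2k},
\end{equation*}
valid for all $d$ (equivalently, an identity of polynomials in $d$ of degree $2t-1$). This is the real content to be proved.

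Next I would recognize the left side as $\binom{d+t-1}{2t-1}=\frac1{(2t-1)!}(d+t-1)(d+t-2)\cdots(d-t+1)=\frac1{(2t-1)!}\prod_{i=-(t-1)}^{t-1}(d+i)=\frac1{(2t-1)!}\,d\prod_{i=1}^{t-1}(d^2-i^2)$. So it suffices to show
\begin{equation*}
d\prod_{i=1}^{t-1}(d^2-i^2)=\sum_{k=0}^{t-1}(-1)^k\left(\sum_{j=-k}^{k}(-1)^j\sfk{t}{t-k+j}\sfk{t}{t-k-j}\right)d^{\,2t-1-2k}.
\end{equation*}
The key observation is that $\prod_{i=0}^{t-1}(d+i)=\sum_{m}\sfk{t}{m}d^{m}$ is the defining generating identity for the unsigned Stirling numbers of the first kind (rising factorial), and likewise $\prod_{i=0}^{t-1}(d-i)=\sum_{m}(-1)^{t-m}\sfk{t}{m}d^{m}$ for the falling factorial. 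Multiplying these two together, $\prod_{i=0}^{t-1}(d+i)\prod_{i=0}^{t-1}(d-i)=d^2\prod_{i=1}^{t-1}(d^2-i^2)$, one gets a Cauchy product $\sum_{m_1,m_2}(-1)^{t-m_2}\sfk{t}{m_1}\sfk{t}{m_2}d^{m_1+m_2}$; extracting the coefficient of $d^{2t-2k}$ (writing $m_1=t-k+j$, $m_2=t-k-j$ so $m_1+m_2=2t-2k$ and the sign $(-1)^{t-m_2}=(-1)^{k+j}$, matching $(-1)^j$ up to the global sign absorbed in $(-1)^k$) recovers exactly the inner sum. Dividing by $d^2$ then yields the displayed polynomial identity; one should double-check parity and the range of $j$ (terms outside $0\le m_1,m_2\le t-1$ vanish because $\sfk{t}{m}=0$ there, which is why the inner sum can be truncated at $|j|\le k$).

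The main obstacle I expect is bookkeeping with signs and index shifts: reconciling the factor $(-1)^{t-m_2}$ coming from the falling factorial with the stated $(-1)^k(-1)^j$, and confirming that the substitution $(m_1,m_2)\mapsto(t-k+j,\,t-k-j)$ is a bijection onto the relevant coefficient support for each fixed $k$. A secondary but routine point is the interchange of summations needed to pass from $\mathbf{G}_t(q)=\sum_m\sum_{r\ge t}\binom{r+t-1}{2t-1}q^{mr}$ and $\mathbf{S}_s(q)$ to their divisor-sum (Lambert series) form and to conclude that equality of the two $q$-series is equivalent to the polynomial identity evaluated at every positive integer $d$ — and since both sides are polynomials in $d$ of degree $2t-1$, agreement at infinitely many integers forces the polynomial identity. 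Once the polynomial identity is in hand, the proposition follows immediately by comparing coefficients of $q^N$ for every $N$.
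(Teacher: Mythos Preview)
Your proposal is correct and follows essentially the same route as the paper: both reduce the $q$-series identity, via the Lambert/binomial expansions of $\mathbf{G}_t$ and $\mathbf{S}_s$, to the polynomial identity $(d+t-1)_{2t-1}=\sum_{k=0}^{t-1}(-1)^k u(t,k)\,d^{2t-1-2k}$ with $u(t,k)=\sum_{j=-k}^{k}(-1)^j\sfk{t}{t-k+j}\sfk{t}{t-k-j}$. The only difference is that the paper identifies $u(t,k)$ as a central factorial number and cites \cite{Merca} for this polynomial identity, whereas you supply a direct, self-contained derivation by multiplying the Stirling-number expansions of the rising and falling factorials $\prod_{i=0}^{t-1}(d\pm i)$ and reading off the coefficient of $d^{2t-2k}$; your sign bookkeeping $(-1)^{t-m_2}=(-1)^{k+j}$ and the truncation $|j|\le k$ (from $\sfk{t}{m}=0$ outside $1\le m\le t$) are both correct.
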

\begin{proof} We write
$$u(t,k)=\sum_{j=-k}^k(-1)^j \sfk{t}{t-k+j} \cdot \sfk{t}{t-k-j}$$
to observe that $u(t,k)$ are related to the so-called {\sl central factorial numbers} (see A008955) and it is known that (see \cite[Section 3]{Merca})
$$\sum_{k=0}^{t-1}(-1)^ku(t,k)x^{2t-1-2k}=(x+t-1)_{2t-1}$$
where $(x)_n=x(x-1)\cdots(x-n+1)$. Hence the right-hand side of \eqref{Stir-eqn} can be written as
\begin{align*}
\frac1{(2t-1)!}\sum_{k=0}^{t-1}(-1)^ku(t,k)\mathbf{S}_{2t-1-2k}(q) 
&=\frac1{(2t-1)!}\sum_{k=0}^{t-1}(-1)^ku(t,k)\sum_{j\geq1}\frac{j^{2t-1-2k}q^j}{1-q^j}\\
&=\sum_{j\geq1}\frac{q^j}{1-q^j}\frac1{(2t-1)!}\sum_{k=0}^{t-1}(-1)^ku(t,k)j^{2t-1-2k}\\
&=\sum_{j\geq1}\binom{j+t-1}{2t-1}\frac{q^j}{1-q^j}\\
&=\sum_{j\geq1}\sum_{k\geq1}\binom{j+t-1}{2t-1}q^{jk}.
\end{align*}
On the other hand, the left-hand side of \eqref{Stir-eqn} is
\begin{align*}
\mathbf{G}_t(q)&=\sum_{k\geq1}\frac{q^{tk}}{(1-q^k)^{2t}}
=\sum_{k\geq1}\sum_{j\geq1}\binom{j+t-1}{2t-1}q^{jk}
\end{align*}
and this completes the proof.
\end{proof}

\noindent
In the present context, one remarkable result is MacMahon's \cite[equation (1)]{MacMahon} formula
$$2^{2t}(2t+1)!\,U_t(q)=(-1)^t\frac1{\mathbf{J}_1}\,\mathbf{J}(\mathbf{J}^2-1^2)(\mathbf{J}^2-3^2)\cdots(\mathbf{J}^2-(2t-1)^2)$$
where we interpret $\mathbf{J}^t(q)$ (umbrally) as the $q$-series $\mathbf{J}_t(q)=\sum_{m\geq0}(-1)^m(2m+1)^t\,q^{\binom{m+1}2}$. Below, we list a few of our own findings.

\begin{example}
There is a more succinct presentation of Proposition \ref{Stir} in a manner
\begin{align*}
(2t-1)!\,\mathbf{G}_t(q)
%&=\frac1{(2a-1)!}\sum_{k=0}^{a-1}(-1)^k\,e_k(1^2,2^2,\dots,(n-1)^2)\,\pmb{S}_{2a-1-2k}(q) \\
&=\mathbf{S}(\mathbf{S}^2-1^2)(\mathbf{S}^2-2^2)(\mathbf{S}^2-3^2)\cdots(\mathbf{S}^2-(t-1)^2),
\end{align*}
associating $\mathbf{S}^t(q)$ to the \emph{umbral notation} for $\mathbf{S}_t(q)$, and multiplication is being executed accordingly. It is also possible to reverse the expansion, i.e., we can depict the $\mathbf{S}_t(q)$'s as a combination of a finite number of $\mathbf{G}_t(q)$'s:
\begin{align*}
\sum_{k=1}^tT(t,k)\,(2k-1)!\,\mathbf{G}_k(q)=\mathbf{S}_{2t-1}(q)
\end{align*}
with $T(t,k)=2\sum_{i=1}^k\frac{(-1)^{k-i}i^{2t}}{(k-i)!(k+i)!}$ being the central factorial numbers \cite[Exercise 5.8]{Stan}. This claim, however, is a simple consequence of the property that
$$x^t=\sum_{k=1}^t T(t,k)\, x(x-1^2)(x-2^2)\cdots(x-(k-1)^2)$$
as inherited from the generating function
$$\sum_{t\geq0}T(t,k)x^t=\frac{x^k}{(1-1^2x)(1-2^2x)\cdots(1-k^2x)}.$$
\end{example}

\begin{example}
Incidentally, there is a very similar result found in \cite[Lemma 2.5]{Andrews-et-al} which we are able to improve its formulation from the implicit $\sum_{m\geq1}\frac{q^{tm}}{(1-q^m)^t}=\sum_{j=0}^{t-1}c_{t,j}\mathbf{S}_j(q)$ to the more explicit and compact state-of-affairs:
$$(t-1)!\,\sum_{m\geq1}\frac{q^{tm}}{(1-q^m)^t}=\mathbf{S}_0(\mathbf{S}-1)(\mathbf{S}-2)(\mathbf{S}-3)\cdots(\mathbf{S}-(t-1)).$$
\end{example}

\begin{example}
Furthermore, the authors in \cite[Theorem 2.1]{Andrews-et-al} show that there exists  some polynomial $M_t\in\mathbb{Q}[x_1,\dots,x_t]$ such that 
$$\sum_{m\geq1}\frac{(-1)^{m-1}q^{\binom{m+1}2}}{(1-q^m)^t\prod_{j=1}^m(1-q^m)}=M_t(\mathbf{S}_0(q),\mathbf{S}_1(q),\dots,\mathbf{S}_{t-1}(q)).$$
On the other hand, \cite[Theorem 3]{D} proves 
\begin{align} \label{missed}
\sum_{m\geq1}\frac{(-1)^{m-1}q^{\binom{m+1}2}}{(1-q^m)^t\prod_{j=1}^m(1-q^m)}
=\sum_{i=1}^t\left\{\sum_{j=0}^{t-i}\binom{t-1}{j+i-1}\frac{\sfk{j+i}{i}}{(j+i)!}\right\}\mathbf{R}_i(q)
\end{align}
where $\mathbf{R}_t(q):=\sum_{m\geq1}m^tq^m\prod_{j\geq m+1}(1-q^j)$.

\smallskip
\noindent
We discover here, too, that there is a ``missed opportunity" in \eqref{missed}: one may write instead
\begin{align*} t! \,\sum_{m\geq1}\frac{(-1)^{m-1}q^{\binom{m+1}2}}{(1-q^m)^t\prod_{j=1}^m(1-q^m)}
&=\mathbf{R}(\mathbf{R}+1)(\mathbf{R}+2)\cdots(\mathbf{R}+t-1).
\end{align*}
\end{example}

\section{Congruences for generalized divisor sums}

\noindent
Let $M(t,n)$ denote the coefficients in the power series expansion of the three equivalent functions from Corollary \ref{tranMac}.  We chose the single sum (instead of the multiple sums)
\begin{align*}
\sum_{n\geq0}M(t,n)\,q^n:=\sum_{k\geq1}\frac{(-1)^{k-1}(1+q^k)\,q^{\binom{k}2+tk}}{(1-q^k)^{2t}}.
\end{align*}

\begin{theorem} \label{cong1} We have

\smallskip

\noindent
(i) If $t\equiv 0 \pmod{3}$ or $t\equiv 1 \pmod{3}$ then $3\,\vert \, M(t,3n+2)$.

\noindent
(ii) If $t\equiv 0 \pmod{5}$ then $5\,\vert \, M(t,5n+2)$ and $5\,\vert\,M(t,5n+4)$.

\noindent
(iii) If $t\equiv 2 \pmod{5}$ then $5\,\vert \, M(t,5n+1)$ and $5\,\vert\,M(t,5n+3)$.

\noindent
(iv) If $t\equiv 2 \pmod{7}$ then $7\,\vert \, M(t,7n+1)$.

\noindent
(v) If $t\equiv 3 \pmod{7}$ then $7\,\vert \, M(t,7n+1)$, $7\,\vert \, M(t,7n+2)$,  and $7\,\vert \, M(t,7n+6)$.

\end{theorem}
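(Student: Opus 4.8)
I would work modulo the relevant prime $p\in\{3,5,7\}$ and control the exponents of $q$ modulo $p$ in the single-sum generating function of $M(t,n)$. Fix $p$ and write $2t=ap+b$ with $0\le b<p$. The Frobenius congruence $(1-q^{k})^{p}\equiv 1-q^{pk}\pmod p$ gives, in $\mathbb{F}_{p}[[q]]$,
\[\sum_{n\ge0}M(t,n)\,q^{n}\ \equiv\ \sum_{k\ge1}(-1)^{k-1}\,q^{\binom{k}{2}+tk}\,\frac{1+q^{k}}{(1-q^{k})^{b}}\cdot\frac{1}{(1-q^{pk})^{a}}\pmod p.\]
Here $1/(1-q^{pk})^{a}$ is a power series in $q^{p}$, hence irrelevant to residues of exponents modulo $p$. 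Expand $(1+q^{k})/(1-q^{k})^{b}=\sum_{j\ge0}d_{j}\,q^{jk}$ in $\mathbb{F}_{p}[[q]]$: for $b=0$ this is $1+q^{k}$, and for $b\ge1$ one has $d_{0}=1$ and $d_{j}=\binom{j+b-1}{b-1}+\binom{j+b-2}{b-1}$ for $j\ge1$. Since $0\le b-1<p$, Lucas's theorem shows $d_{j}\bmod p$ depends only on $j\bmod p$, so $S_{b}:=\{\,j\bmod p:\ d_{j}\not\equiv0\pmod p\,\}\subseteq\mathbb{Z}/p$ is well defined; moreover for $b\ge2$ and $j\ge1$ one has the polynomial form $d_{j}=(j+1)(j+2)\cdots(j+b-2)\,(2j+b-1)/(b-1)!$ (empty product $=1$ when $b=2$), whose denominator is invertible mod $p$, so that $S_{b}$ is read off from its roots modulo $p$.

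The key reduction is then: if $p\mid k$, every exponent occurring in the $k$-th summand is $\equiv0\pmod p$ (because $\binom{k}{2}\equiv tk\equiv0$ and the remaining factors are series in $q^{p}$); and if $p\nmid k$, a monomial $q^{N}$ with $N\equiv r\pmod p$ occurs in the $k$-th summand only when $r\equiv\binom{k}{2}+tk+jk\pmod p$ for some $j$ with $j\bmod p\in S_{b}$, while all pairs $(j,m)$ contributing to a fixed power $q^{N}$ lie in a single residue class of $j$ modulo $p$, so no cancellation can resurrect a residue absent from $S_{b}$. Using $\binom{k}{2}\equiv2^{-1}k(k-1)\pmod p$, this yields: for $r\not\equiv0\pmod p$,
\[r\ \notin\ \bigcup_{k=1}^{p-1}\Bigl(2^{-1}k(k-1)+tk+k\cdot S_{b}\Bigr)\ \text{ in }\mathbb{Z}/p\ \ \Longrightarrow\ \ M(t,pn+r)\equiv0\pmod p\ \text{ for all }n\ge0.\]

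It then remains, for each item, to read off $b$ from $t\bmod p$, compute $S_{b}$, and carry out the bounded verification over $k=1,\dots,p-1$ (in every case the relevant $r$ is $\not\equiv0\pmod p$). For (i): $t\equiv0,1\pmod3$ give $b=0,2$ with $S_{0}=\{0,1\}$ and $S_{2}=\{0,2\}$, and in both cases the union above is $\{0,1\}$, excluding $r=2$. For (ii): $t\equiv0\pmod5$ gives $b=0$, $S_{0}=\{0,1\}$, union $\{0,1,3\}$, excluding $r\in\{2,4\}$. For (iii): $t\equiv2\pmod5$ gives $b=4$ with $d_{j}\equiv(j+1)(j+2)(2j+3)\pmod5$, so $S_{4}=\{0,2\}$, union $\{0,2,4\}$, excluding $r\in\{1,3\}$. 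For (iv): $t\equiv2\pmod7$ gives $b=4$ with $d_{j}\equiv6(j+1)(j+2)(2j+3)\pmod7$, so $S_{4}=\{0,1,3,4\}$, union $\{0,2,3,4,5,6\}$, excluding $r=1$. For (v): $t\equiv3\pmod7$ gives $b=6$ with $d_{j}\equiv(j+1)(j+2)(j+3)(j+4)(2j+5)\pmod7$, so $S_{6}=\{0,2\}$, union $\{0,3,4,5\}$, excluding $r\in\{1,2,6\}$.

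The obstacle here is organizational rather than deep: the one step needing care is the clean determination of each $S_{b}$ (the Lucas-theorem reduction of $d_{j}$ to a polynomial of degree $b-1$ in $j$ is the efficient route), after which the five small modular tables are routine. It is worth remarking that the residue classes of $t$ not addressed by the theorem are exactly those with $2t\equiv1\pmod p$ --- where $d_{j}\equiv2\not\equiv0$ forces $S_{b}=\mathbb{Z}/p$ --- together with those for which the displayed union already exhausts $\mathbb{Z}/p$; that is why no congruence is asserted in those cases.
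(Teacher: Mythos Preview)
Your proof is correct and follows the same term-by-term strategy as the paper's: for each $k$, determine which residues $j\bmod p$ give a nonvanishing coefficient (your set $S_b$ plays the role of the paper's analysis of $\delta(t)=\binom{m+2t-1}{2t-1}+\binom{m+2t-2}{2t-1}$), then verify by a finite check over $k=1,\dots,p-1$ that the resulting exponents $\binom{k}{2}+tk+jk$ never land in the target residue classes modulo $p$. The only cosmetic difference is that you first reduce the denominator via the Frobenius congruence $(1-q^k)^p\equiv 1-q^{pk}\pmod p$ and then expand, whereas the paper expands $(1+q^k)/(1-q^k)^{2t}$ directly and reduces the binomial coefficients via Lucas' theorem --- the two reductions being equivalent.
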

\begin{proof} Actually, we declare a stronger statement: all the claims hold true term-by-term in the generating function for $M(t,n)$. 
For each fixed $k\geq1$, we have
\begin{align*}\sum_{n\geq0}\Phi(t,k,n)\,q^n&:= \frac{(1+q^k)\,q^{\binom{k}2+tk}}{(1-q^k)^{2t}}=\sum_{m\geq0}\binom{m+2t-1}{2t-1}(1+q^k)q^{\binom{k}2+(m+t)k}\\
&=\sum_{m\geq0}\left(\binom{m+2t-1}{2t-1}+\binom{m-1+2t-1}{2t-1}\right)q^{\binom{k}2+(m+t)k}.
\end{align*}

\noindent (i) Denote $\delta(t):=\binom{m+2t-1}{2t-1}+\binom{m-1+2t-1}{2t-1}$. By \emph{Lucas' theorem},  if $t=3s$ then 
\begin{align*}
\delta(3s)&=\binom{m+3(2s-1)+2}{3(2s-1)+2}+\binom{m+3(2s-1)+1}{3(2s-1)+2} \equiv \binom{m+2}{2}+\binom{m+1}{2} \\
&=(m+1)^2 \not\equiv 0 \Leftrightarrow m\equiv 0,1 \pmod{3}.
\end{align*}
On the other hand, it is easy to verify that for all $k$,
$$\binom{k}2+(m+3s)k\equiv \text{$0$, $m$, $2m+1$} \pmod{3}.$$
Thus, for $m\equiv 0, 1\pmod 3$, we have $\binom{k}2+(m+3s)k\not \equiv 2 \pmod{3}$ as desired. The outcome is $3\,\vert\, \Phi(3s,k,3n+2)$ for any $k$. Finally, summing over all $k\geq1$, we are done with the first claim (i).

\smallskip

\noindent 
The remaining claims (congruences) are obtained \emph{mutatis mutandi}. 

\smallskip

\noindent (ii) If $t=5s$ then a similar analysis shows
\begin{align*}
\delta(5s)&=\binom{m+5(2s-1)+4}{5(2s-1)+4}+\binom{m+5(2s-1)+3}{5(2s-1)+4} \equiv \binom{m+4}{4}+\binom{m+3}{4} \\
&=\frac{(m+1)(m+2)^2(m+3)}{12} \equiv 3(m-2)(m-3)^2(m-4) \\
& \not\equiv 0 \Leftrightarrow m\equiv 0,1 \pmod{5}.
\end{align*}
On the other hand, it is easy to verify that for all $k$,
$$\binom{k}2+(m+5s)k\equiv \text{$0$, $m$, $2m+1$, $3m+3$, $4m+1$}\pmod{5}.$$
Thus, for $m\equiv 0, 1\pmod 5$, we have $\binom{k}2+(m+5s)k\not \equiv 2, 4\pmod{5}$ as desired.

\smallskip

\noindent (iii) If $t=5s+2$ then
\begin{align*}
\delta(5s+2)&=\binom{m+5(2s)+3}{5(2s)+3}+\binom{m+5(2s)+2}{5(2s)+3} \equiv \binom{m+3}{3}+\binom{m+2}{3} \\
&=\frac{(2m+3)(m+2)(m+1)}{6} \equiv -3(m-1)(m-3)(m-4)\\
&\not\equiv 0 \Leftrightarrow m\equiv 0,2 \pmod{5}.
\end{align*}
It is so \emph{facile} to verify that for all $k$,
$$\binom{k}2+(m+5s+2)k\equiv \text{$0$, $m+2$, $2m$, $3m-1$, or $4m-1$}\pmod{5}.$$
Therefore, for $m\equiv 0,2 \pmod 5$, we obtain $\binom{k}2+(m+5s+2)k\not \equiv 1,3\pmod{5}$.

\smallskip
\noindent (iv) If $t=7s+2$ then
\begin{align*}
\delta(7s+3) &\equiv -2(m-2)(m-5)(m-6) \not\equiv 0 \Leftrightarrow m\equiv 0,1,3,4 \pmod{7},
\end{align*}
and for $m\equiv 0,1,3,4 \pmod 7$ and any $k$, we conquer claim (iv) due to
$$\binom{k}2+(m+7s+2)k\not \equiv 1\pmod{7}.$$

\smallskip
\noindent (v) If $t=7s+3$ then
\begin{align*}
\delta(7s+3) &\equiv 2(m-1)(m-3)(m-4)(m-5)(m-6) \not\equiv 0 \Leftrightarrow m\equiv 0,2 \pmod{7},
\end{align*}
and for $m\equiv 0,2 \pmod 7$, there follows $\binom{k}2+(m+7s+3)k\not \equiv 1,2,6\pmod{7}$.
This concludes the proof of claim (v) and the theorem.
\end{proof}

\smallskip
\noindent
Recall the common notation $\sigma_s(n)=\sum_{d\,\vert\,n}d^s$ for the \emph{power-sum of divisors} of $n$.

\begin{lemma} \label{before_M84} We have the representations
\begin{align}
V_3(q)&=\frac{1}{1920}\sum_{n\geq 0}\left((40n^2+60n+9)\sigma_1(n)
-70(n+1)\sigma_3(n)+31\sigma_5(n)\right)q^n, \label{oneV3} \\
U_3(q)-V_3(q)&=\frac1{1920}\sum_{n\geq0}((-160n+28)\sigma_1(n)+(40n+120)\sigma_3(n)-28\sigma_5(n))q^n, \label{oneU3V3} \\
U_4(q)&=\frac{1}{967680}\sum_{n\geq 0}\big((-840n^3+5880n^2-9870n+3229)\sigma_1  \nonumber \\
&\quad \quad \quad +(756n^2-4410n+4935)\sigma_3+(-126n+441)\sigma_5+5\sigma_7\big)q^n. \label{oneU4}
\end{align}
\end{lemma}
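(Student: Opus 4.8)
The plan is to reduce all three identities in Lemma~\ref{before_M84} to the single master fact that the quasi-modular forms $U_t(q)$ and $V_t(q)$ can be expressed as explicit $\mathbb{Q}$-linear combinations of the Lambert series $\mathbf{S}_s(q)=\sum_{m\geq1}m^sq^m/(1-q^m)=\sum_{n\geq1}\sigma_s(n)q^n$ together with the ``derivative'' series $\sum_{n\geq1}n\sigma_s(n)q^n=q\frac{d}{dq}\mathbf{S}_s(q)$. Concretely, I would first record that $\mathbf{G}_t(q)=\sum_{m\geq1}q^{tm}/(1-q^m)^{2t}$ is, by Proposition~\ref{Stir}, a fixed rational combination of $\mathbf{S}_1,\mathbf{S}_3,\dots,\mathbf{S}_{2t-1}$, so in particular for $t=3$ one gets $5!\,\mathbf{G}_3(q)=\mathbf{S}(\mathbf{S}^2-1)(\mathbf{S}^2-4)$, i.e. $120\,\mathbf{G}_3=\mathbf{S}_5-5\mathbf{S}_3+4\mathbf{S}_1$ after umbral expansion. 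Since $\mathbf{S}_s(q)=\sum_{n\geq1}\sigma_s(n)q^n$, this already gives one of the two building blocks with $\sigma_1,\sigma_3,\sigma_5$ coefficients that are \emph{constant} in $n$.

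Next I would obtain the pieces of $U_t$ and $V_t$ that are \emph{not} captured by $\mathbf{G}_t$ alone. The cleanest route is the recursion engine already in the paper: from Lemma~\ref{ele-hom} we have $V_t=\sum_{i=1}^t(-1)^{i-1}U_iV_{t-i}$ (used in Theorem~\ref{quasi1}), and Example~\ref{useful_ex} already spells out $V_2$ and $V_3$ in terms of $V_1$, $V_2$, and $q\frac{d}{dq}$ of those. Combined with $V_1(q)=\frac{1-E_2(q)}{24}=\sum_{n\geq1}\sigma_1(n)q^n$ and MacMahon's formula $2^{2t}(2t+1)!\,U_t(q)=(-1)^t\frac1{\mathbf J_1}\mathbf J(\mathbf J^2-1)\cdots(\mathbf J^2-(2t-1)^2)$ — or, more usefully for arithmetic, the known expansions of $U_2$ and $U_3$ in Eisenstein series $E_2,E_4,E_6$ — every quantity in \eqref{oneV3}, \eqref{oneU3V3}, \eqref{oneU4} becomes a finite $\mathbb{Q}$-linear combination of $\sum_n \sigma_s(n)q^n$ and $\sum_n n\sigma_s(n)q^n$ for $s\in\{1,3,5,7\}$. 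The terms with a factor $n$ (and $n^2$, $n^3$) arise precisely from the $q\frac{d}{dq}$ operators, since $q\frac{d}{dq}\sum_n\sigma_s(n)q^n=\sum_n n\sigma_s(n)q^n$; iterating produces the $n^2$ and $n^3$ coefficients in \eqref{oneV3} and \eqref{oneU4}. I would set up the three target identities as claims about these coefficient polynomials in $n$, then verify them.

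For the actual verification I would use the standard structure theorem for the ring of quasi-modular forms of level one: it is the polynomial ring $\mathbb{C}[E_2,E_4,E_6]$, graded by weight (with $\deg E_2=2$, $\deg E_4=4$, $\deg E_6=6$), and the derivation $q\frac{d}{dq}$ raises weight by $2$ via Ramanujan's identities $q\frac{d}{dq}E_2=\frac{E_2^2-E_4}{12}$, $q\frac{d}{dq}E_4=\frac{E_2E_4-E_6}{3}$, $q\frac{d}{dq}E_6=\frac{E_2E_6-E_4^2}{2}$. Expressing $U_3,V_3,U_4$ as polynomials in $E_2,E_4,E_6$ (weights $6$, $6$, $8$ respectively), extracting $q^n$ coefficients using $E_{2k}=1-\frac{4k}{B_{2k}}\sum_n\sigma_{2k-1}(n)q^n$ and the Cauchy-product convolution identities for $\sigma_s$, and matching against the right-hand sides, reduces each identity to a finite set of polynomial identities in $n$ that can be checked directly (or, more robustly, by comparing finitely many Fourier coefficients since everything lives in a finite-dimensional space of fixed weight).

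The main obstacle is bookkeeping, not conceptual: one must correctly assemble $U_3$, $V_3$, and $U_4$ from MacMahon's umbral product (or from the $e$--$h$ recursion plus $U_2$, $U_3$) \emph{and} correctly track how the $q\frac{d}{dq}$ operators in the $V_t$ recursions generate the polynomial-in-$n$ prefactors; a single sign or binomial-coefficient slip in the central-factorial expansion of $\mathbf{G}_3$ or in Ramanujan's derivative formulas will throw off the rational constants $\tfrac1{1920}$ and $\tfrac1{967680}$. I would mitigate this by cross-checking the final coefficient polynomials against the first several Taylor coefficients of the defining multiple sums for $U_3,V_3,U_4$ (e.g. $V_3(q)=q^3+\cdots$, and small cases computable by hand as in the $M(2,4)=14$ example), which pins down the normalization unambiguously.
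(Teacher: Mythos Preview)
Your proposal is correct and follows essentially the same route as the paper: express $V_2,V_3$ via the recursions in Example~\ref{useful_ex}, convert everything to polynomials in $E_2,E_4,E_6$ using $V_1=\tfrac{1-E_2}{24}$ and Ramanujan's derivative identities, and then match against the right-hand sides written via $\sum\sigma_s(n)q^n=\tfrac{1-E_{s+1}}{(\text{const})}$; for $U_3-V_3$ the paper likewise uses Lemma~\ref{ele-hom} to reduce to $V_1,V_2$. Two minor remarks: your opening detour through $\mathbf{G}_t$ and Proposition~\ref{Stir} is not actually needed (the paper never invokes it here, since the $V_t$ recursions already close everything up), and for \eqref{oneU4} the paper simply cites MacMahon's own computation rather than rederiving it from the umbral product or the Andrews--Rose recurrence, so you may save yourself some bookkeeping there.
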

\begin{proof} Let's recall the formulation of the power-sum divisor functions in terms of the three quasi-modular forms as:
$$\sum_{n\geq 0}\sigma_1(n)q^n=\frac{1-E_2(q)}{24}, 
\quad \sum_{n\geq 0}\sigma_3(n)q^n=\frac{E_4(q)-1}{240}, \quad
\sum_{n\geq 0}\sigma_5(n)q^n=\frac{1-E_6(q)}{504}.$$ 
If we denote the modular derivative by $D=q\frac{d}{dq}$, then Ramanujan's valuable formulas \cite[page 181, equation (30)]{Rama} can be brought to bear
\begin{align*}
DE_2=\frac{E_2^2-E_4}{12}, \qquad DE_4=\frac{E_2E_4-E_6}3, \qquad DE_6=\frac{E_2E_6-E_4^2}2.
\end{align*}
Being mindful of $V_1=\sum\sigma_1(n)q^n$, revive the formulas from Example \ref{useful_ex} of Section 6:
\begin{align} \label{oneV2}
V_2&=\frac1{10}(7V_1-1)V_1+\frac1{10}DV_1, \\ 
V_3&=\frac1{21}(19V_1-3)V_2-\frac4{21}V_1^3+\frac1{21}V_1^2+\frac1{21}DV_2. \label{twoV3}
\end{align}
Combining all of the above relations and after a direct (though routine) calculation, we are able to secure that both equation \eqref{oneV3} and equation \eqref{twoV3} are equal to
\begin{align*}
\frac{367}{967680}-\frac1{5120}E_2-\frac1{9216}E_2^2-\frac1{82944}E_2^3 -\frac1{23040}E_4-\frac1{69120}E_2E_4-\frac1{181440}E_6,
\end{align*}
where we used properties such as $\sum_{n\geq 0} n\sigma_s(n)q^n=D\sum_{n\geq 0}\sigma_s(n)q^n$.

\smallskip
\noindent
Recalling $e_t=U_t$ and $h_t=V_t$, by Lemma \ref{ele-hom} we have $U_3-V_3=V_1^3-2V_2V_1$. Now, use equation \eqref{oneV2} so that $U_3-V_3=-\frac25V_1^3+\frac15V_1^2-\frac15V_1\cdot DV_1$. It suffices to verify
$$\frac{-2V_1^3+V_1^2-V_1\cdot DV_1}5
=\frac{(-160D+28)\sum\sigma_1q^n+(40D+120)\sum\sigma_3q^n-28\sum\sigma_5q^n}{1920}.$$
This, however, pertains a similar procedure as in the first part above wherein direct computation shows both sides agree with
$$\frac{11}{34560} -\frac7{11520}E_2+ \frac1{3456}E_2^2 + \frac1{34560}E_2 E_4 -\frac1{34560}E_4.$$
To prove \eqref{oneU4}, we may use a recurrence from \cite[Corollary 3]{Andrews-Rose} ($A_k(q)$ renamed $U_k(q)$ here),
$$U_t(q)=\frac1{2t(2t+1)}\left[(6U_1(q)+t(t-1))U_{t-1}(q)-2DU_{t-1}(q)\right]$$
together with Ramanujan's \cite[Table IV]{Rama} formulas for $\sum\sigma_a(j)\sigma_b(n-j)$ and also the above relationships between the Eisenstein series $E_t(q)$ and sum of divisors $\sigma_s(q)$. We chose the quicker way: MacMahon \cite[page 104]{MacMahon} (typo corrected) has derived this already!
\end{proof}

\begin{theorem}  \label{M84} If $t=1, 2$ or $3$ then $7\,\vert \, M(t,8n+4)$.\end{theorem}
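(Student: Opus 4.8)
The plan is to turn the congruence into divisibility assertions about the power-sum divisor functions $\sigma_1,\sigma_3,\sigma_5$, using explicit closed forms for $M(1,N)$, $M(2,N)$, $M(3,N)$, and then to exploit the multiplicativity of $\sigma_s$ at arguments $N=8n+4$.

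First I would collect the three closed forms. From Corollary~\ref{tranMac} at $t=1$ (together with the Remark following it) one has $M(1,N)=\sigma_1(N)$, and \eqref{oneV3} already records
$$1920\,M(3,N)=(40N^2+60N+9)\sigma_1(N)-70(N+1)\sigma_3(N)+31\sigma_5(N).$$
The $t=2$ formula is the only one not present in the paper, so I would derive it from \eqref{oneV2}: with $V_1=\sum_{N\geq1}\sigma_1(N)q^N$ and $DV_1=\sum_{N\geq1}N\sigma_1(N)q^N$, inserting Ramanujan's convolution identity $\sum_{j=1}^{N-1}\sigma_1(j)\sigma_1(N-j)=\frac1{12}\bigl(5\sigma_3(N)+(1-6N)\sigma_1(N)\bigr)$ (the Table~IV relation already invoked in the proof of Lemma~\ref{before_M84}) into $V_2=\frac1{10}(7V_1-1)V_1+\frac1{10}DV_1$ and simplifying gives
$$24\,M(2,N)=7\sigma_3(N)-(6N+1)\sigma_1(N).$$

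The arithmetic input is this: for $N=8n+4=4(2n+1)$ we have $\gcd(4,2n+1)=1$, so $\sigma_s(8n+4)=\sigma_s(4)\,\sigma_s(2n+1)$ by multiplicativity, and since $\sigma_1(4)=7$ and $\sigma_5(4)=1+2^5+4^5=1057=7\cdot151$ it follows that $7\mid\sigma_1(8n+4)$ and $7\mid\sigma_5(8n+4)$. (More generally $7\mid 1+2^s+4^s$ precisely when $s\not\equiv0\pmod3$, which is exactly why $\sigma_3(4)=73$ is \emph{not} divisible by $7$ and must instead be handled through its coefficient.) Substituting $N=8n+4$ into each closed form and reducing modulo $7$: the $\sigma_1$- and $\sigma_5$-terms vanish; the $\sigma_3$-term is absent when $t=1$, has coefficient $7$ when $t=2$, and has coefficient $70(N+1)$ when $t=3$, so it vanishes too; and the normalizing constants $1$, $24$, $1920$ are units modulo $7$ (indeed $24\equiv3$ and $1920\equiv2$). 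Hence $M(t,8n+4)\equiv0\pmod7$ for $t=1,2,3$.

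The main difficulty is structural rather than computational: the argument rests entirely on the coefficient of $\sigma_3$ in each of the three closed forms being a multiple of $7$, which is a genuine feature of these particular $V_t$ (and is what pins the statement to $t\leq3$) rather than anything guaranteed by the method. So the real work lies in deriving the $t=2$ closed form correctly and checking this $\sigma_3$-coefficient divisibility in all three cases.
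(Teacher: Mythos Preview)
Your proof is correct and follows essentially the same route as the paper: express each $V_t$ coefficient in terms of $\sigma_s$ via the closed forms of Example~\ref{useful_ex} and Lemma~\ref{before_M84}, then use multiplicativity at $8n+4=4(2n+1)$ together with $7\mid\sigma_1(4)$ and $7\mid\sigma_5(4)$. The only difference is a small detour in your $t=2$ case: the paper observes directly from $10V_2=7V_1^2-V_1+DV_1$ that $V_2\equiv 5\sum (N-1)\sigma_1(N)q^N\pmod 7$ (the $7V_1^2$ term dies before any expansion), so Ramanujan's convolution identity is not needed there.
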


\begin{proof} The case $t=1$. By Corollary \ref{tranMac} (and the beginning of Section 6), 
$$\sum_{n\geq 0} M(1,n)q^n=\sum_{k=1}^{\infty}\frac{q^k}{(1-q^k)^2}=U_1(q)=\sum_{n=1}^{\infty}\sigma_1(n)q^n.$$
Then, due of the arithmetic property of $\sigma_s(n)$, we obtain 
$$M(1,8n+4)=\sigma_1(4(2n+1))=\sigma_1(4)\sigma_1(2n+1)=7\sigma_1(2n+1),$$ 
which is indeed divisible by $7$.

\smallskip
\noindent 
The case $t=2$. From Corollary \ref{tranMac} and Lemma \ref{ele-hom} (see also Example \ref{useful_ex} in Section 6),
\begin{align*}
\sum_{n\geq 0} M(2,n)q^n=V_2(q)&=\frac1{10}\left([7V_1(q)-1]V_1(q)+q\frac{d}{dq}V_1(q)\right) \\
&\equiv 5\sum_{n=1}^{\infty}(n-1)\sigma_1(n)q^n\pmod{7}.&
\end{align*}
Then $M(2,8n+4)$ is divisible by $7$ because
$\sigma_1(8n+4)=7\sigma_1(2n+1)$.

\smallskip
\noindent 
The case $t=3$.  Since $V_3(q)=\sum_{n\geq0}M(3,n)q^n$, by equation \eqref{oneV3} of Lemma \ref{before_M84}, we obtain
\begin{align*}\sum_{n\geq 0} M(3,n)q^n&=\frac{1}{1920}\sum_{n\geq 0}\left((40n^2+60n+9)\sigma_1(n)
-70(n+1)\sigma_3(n)+31\sigma_5(n)\right)q^n\\
&\equiv 4\sum_{n\geq 0}\left((5n^2+4n+2)\sigma_1(n)+3\sigma_5(n)\right)q^n\pmod{7},
\end{align*}
it follows that $M(3,8n+4)$ is divisible by $7$ because  $\sigma_1(8n+4)=7\sigma_1(2n+1)$ and by direct calculation
$\sigma_5(8n+4)=\sigma_5(4(2n+1))= \sigma_5(4)\sigma_5(2n+1)=7\cdot 151\,\sigma_5(2n+1)$.
\end{proof}

\smallskip
\noindent
In the Introduction section, the $MO(t,n)$ are defined as the coefficients in the MacMahon's version of the multiple sum $U_t(q)$. However, we choose to implement yet an equivalent generating function from \cite[Corollary 2]{Andrews-Rose}. That is to say,
\begin{align*}
\sum_{n\geq0}MO(t,n)\,q^n
=\frac{(-1)^t}{(q)_{\infty}^3}\sum_{k\geq t}(-1)^k\frac{2k+1}{2t+1}\binom{k+t}{k-t}q^{\binom{k+1}2}.
\end{align*}

\begin{theorem} \label{MO51} If $t=2$ then $5\,\vert \, MO(t,5n+1)$.
\end{theorem}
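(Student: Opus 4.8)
The plan is to reduce the claim, as in the proof of Theorem \ref{M84}, to a congruence statement about an explicit $q$-series and then extract the arithmetic progression modulo $5$. For $t=2$ the generating function is
\begin{align*}
\sum_{n\geq0}MO(2,n)\,q^n
=\frac{1}{(q)_{\infty}^3}\sum_{k\geq 2}(-1)^k\frac{2k+1}{5}\binom{k+2}{k-2}q^{\binom{k+1}2}
=U_2(q),
\end{align*}
so I would first rewrite $MO(2,n)$ in a form where the mod $5$ behaviour is transparent. The cleanest route is to combine Lemma \ref{ele-hom} (which gives $U_2=V_1^2-V_2$, or directly $U_2 = e_2$) with the formula $V_2(q)=\tfrac1{10}\big([7V_1(q)-1]V_1(q)+DV_1(q)\big)$ from Example \ref{useful_ex}, together with $V_1(q)=\sum_{n\geq1}\sigma_1(n)q^n$; alternatively one uses MacMahon's closed form $2^4\cdot 5!\,U_2(q)=\tfrac1{\mathbf J_1}\mathbf J(\mathbf J^2-1)(\mathbf J^2-9)$, but the $V$-route keeps everything over $\mathbb{Z}$-coefficient $q$-series in $\sigma_1,\sigma_3$.

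Next I would clear denominators and reduce mod $5$. From $U_2=V_1^2-V_2=\tfrac1{10}\big(10V_1^2-7V_1^2+V_1-DV_1\big)=\tfrac1{10}\big(3V_1^2+V_1-DV_1\big)$, multiplying by $10$ and reducing modulo $5$ one gets $10\,U_2(q)\equiv 3V_1^2+V_1-DV_1 \pmod 5$, hence (since $10\equiv 0$ but $2$ is invertible mod $5$, one instead writes $U_2=\tfrac{3V_1^2+V_1-DV_1}{10}$ and multiplies by the inverse of $2$ to land on) $U_2(q)\equiv 2\,(3V_1^2+V_1-DV_1)\equiv (V_1^2 + 2V_1 + 3DV_1)\pmod 5$ after adjusting constants; the precise constant is a short finite check. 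This expresses $MO(2,n)$ mod $5$ as an explicit combination of $n\sigma_1(n)$, $\sigma_1(n)$, and the convolution $\sum_{j}\sigma_1(j)\sigma_1(n-j)$, the last of which is given by Ramanujan's identity $\sum_{j=1}^{n-1}\sigma_1(j)\sigma_1(n-j)=\tfrac{5\sigma_3(n)-(6n-1)\sigma_1(n)}{12}$, so that $U_2(q)$ becomes a $\mathbb{Q}$-linear combination of $\sum n^2\sigma_1(n)q^n$, $\sum n\sigma_1(n)q^n$, $\sum\sigma_1(n)q^n$, and $\sum\sigma_3(n)q^n$. Reducing the coefficients modulo $5$ gives $MO(2,n)\equiv \alpha n^2\sigma_1(n)+\beta n\sigma_1(n)+\gamma\sigma_1(n)+\delta\sigma_3(n)\pmod 5$ for explicit $\alpha,\beta,\gamma,\delta\in\mathbb{F}_5$.

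Finally I would specialize to $n=5m+1$. Then $n\equiv 1$ and $n^2\equiv 1\pmod5$, so the right-hand side collapses to $(\alpha+\beta+\gamma)\sigma_1(5m+1)+\delta\sigma_3(5m+1)\pmod5$. Writing $5m+1=\prod p_i^{a_i}$ (note $5\nmid 5m+1$), one has $\sigma_3(5m+1)\equiv\sigma_{3}(5m+1)$ and, because $p^3\equiv p^{3}$ and the multiplicative order considerations mod $5$ give $p^4\equiv1$, a short case analysis on $p\bmod5$ shows $\sigma_3(N)\equiv\pm\sigma_1(N)$ or similar rigid relation when $5\nmid N$; the upshot is that the surviving linear combination is identically $0$ in $\mathbb{F}_5$. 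I expect the main obstacle to be exactly this last bookkeeping: verifying that the particular constants $\alpha+\beta+\gamma$ and $\delta$ produced by the $V_2$-formula and Ramanujan's convolution identity conspire to annihilate the $n\equiv1\pmod5$ slice — this is a finite but delicate computation, and it may be cleaner to instead run the term-by-term argument of Theorem \ref{cong1}, expanding $U_2(q)=\sum_{1\le k_1\le k_2}\tfrac{q^{k_1+k_2}}{(1-q^{k_1})^2(1-q^{k_2})^2}$ minus the diagonal, tracking which exponents $\binom{k}{2}$-type shifts can hit residue $1$ mod $5$, and checking the binomial-coefficient weights vanish mod $5$ by Lucas' theorem; I would present whichever of the two is shorter.
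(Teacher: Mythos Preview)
Your plan uses the same ingredients as the paper --- the decomposition $U_2=V_1^2-V_2$, Example~\ref{useful_ex}, and Ramanujan's convolution identity --- but you miss the key simplification and, as a result, walk into a genuine difficulty.

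The paper does not try to write $MO(2,n)$ itself modulo $5$. Instead it observes that since $U_2=V_1^2-V_2$ and Theorem~\ref{cong1}(iii) already gives $5\mid M(2,5n+1)$ (the coefficient of $V_2$), it suffices to show that the coefficient $c(n)=\sum_{j=1}^{n-1}\sigma_1(j)\sigma_1(n-j)$ of $V_1^2$ satisfies $5\mid c(5n+1)$. Ramanujan's identity $12c(n)=5\sigma_3(n)+(1-6n)\sigma_1(n)$ then finishes in one line: the $5\sigma_3$ term vanishes modulo $5$, and $1-6(5n+1)=-5-30n\equiv0\pmod5$. No $\sigma_3$ survives, and there is nothing left to check.

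Your route, by contrast, tries to reduce $U_2=\tfrac{1}{10}(3V_1^2+V_1-DV_1)$ modulo $5$ directly. This is where the gap is: the passage ``since $10\equiv0$ but $2$ is invertible mod $5$, one instead \ldots\ multiplies by the inverse of $2$'' is not valid --- $10$ has no inverse modulo $5$, so the line $U_2\equiv 2(3V_1^2+V_1-DV_1)\pmod5$ is simply false. To repair it you must first substitute Ramanujan's identity for $V_1^2$ and simplify over $\mathbb{Z}$; one finds $8\,MO(2,n)=\sigma_3(n)+(1-2n)\sigma_1(n)$, and now $8$ \emph{is} invertible mod $5$. But then at $n\equiv1\pmod5$ a nonzero $\sigma_3$ term survives, and you are forced to prove $\sigma_3(5m+1)\equiv\sigma_1(5m+1)\pmod5$. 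That fact is true (it is exactly Lemma~\ref{BeforeMO252}(a) with $p=5,k=1,j=3,a=1,b=-1$, or the paper's Corollary after this theorem), but your sketch ``a short case analysis on $p\bmod5$ shows $\sigma_3(N)\equiv\pm\sigma_1(N)$'' is both imprecise and not yet a proof. So your path can be made to work, but only after fixing the denominator step and supplying this extra divisor-sum lemma --- whereas the paper's splitting via Theorem~\ref{cong1}(iii) avoids both issues entirely. The alternative term-by-term Lucas argument you float at the end would not work as stated either: the strict-inequality sum $U_2$ does not have the single-index structure that makes Theorem~\ref{cong1} go through term by term.
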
 
\begin{proof} Following Lemma \ref{ele-hom}, we obtain $U_2(q)=V_1(q)^2-V_2(q)$. The numbers $MO(2,n)$ and $M(2,n)$ are the power series coefficients of $U_2(q)$ and $V_2(q)$, respectively. Since $5\,\vert\,M(2,5n+1)$, by Theorem \ref{cong1} (iii), it suffices to prove that $5\,\vert\,c(5n+1)$ where we refer to $\sum_{n\geq0}c(n)\,q^n=V_1(q)^2$. In fact, $c(n)=\sum_{j=1}^{n-1}\sigma_1(j)\sigma_1(n-j)$ is a convolution.

\smallskip
\noindent
Next, use the \emph{Eisenstein series} $E_2(\tau)=1-24V_1(q)$ and $E_4(\tau)=1+240\sum_{n\geq0}\sigma_3(n)q^n$ and one of Ramanujan's identities \cite[page 181, equation (30)]{Rama},
$$q\frac{dE_2}{dq}=\frac{E_2^2-E_4}{12}.$$ 
After some rearrangement and reading off the corresponding coefficients, this leads to \cite[page 186, Table IV, identity 1]{Rama},
\begin{align} \label{MO251} 12 \sum_{j=1}^{n-1} \sigma_1(j)\sigma_1(n-j) = 5 \sigma_3(n) + \sigma_1(n) - 6 n \sigma_1(n). \end{align}
Computing modulo $5$ implies that $2c(5n+1)\equiv [1-6(5n+1)]\sigma_1(5n+1)\equiv0 \pmod5$.
\end{proof}

\begin{corollary} For each integer $n\geq0$, we have $\sigma_3(5n+1)\equiv\sigma_1(5n+1) \pmod 5$.
\end{corollary}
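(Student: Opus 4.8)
The plan is \emph{not} to chase this through equation \eqref{MO251}, but to give a short self‑contained argument: I will prove the sharper congruence $\sigma_3(N)\equiv N^3\,\sigma_1(N)\pmod 5$ valid for every $N$ with $\gcd(N,5)=1$, and then specialize to $N=5n+1$, for which $N^3\equiv 1\pmod 5$.

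First I would use the involution $d\mapsto N/d$ on the set of positive divisors of $N$ to rewrite $\sigma_3(N)=\sum_{d\mid N}d^3=\sum_{d\mid N}(N/d)^3=N^3\sum_{d\mid N}d^{-3}$, where the last sum is computed in $\mathbb{Z}/5\mathbb{Z}$; this is legitimate because $5\nmid N$ forces $5\nmid d$ for every divisor $d$, so each $d$ is a unit modulo $5$. Second, Fermat's little theorem gives $d^4\equiv 1\pmod 5$, hence $d^{-3}\equiv d^{-3}\cdot d^4=d\pmod 5$ for each such $d$. Substituting this back yields $\sigma_3(N)\equiv N^3\sum_{d\mid N}d=N^3\sigma_1(N)\pmod 5$. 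Finally, with $N=5n+1$ we have $N\equiv 1\pmod 5$, so $N^3\equiv 1$, and therefore $\sigma_3(5n+1)\equiv\sigma_1(5n+1)\pmod 5$, as claimed. One can equally run the argument termwise: $d^3-d=d(d^2-1)$ vanishes mod $5$ unless $d\equiv\pm 2\pmod 5$, the pairing $d\leftrightarrow N/d$ carries the class $d\equiv 2$ bijectively onto the class $d\equiv 3$ since $2\cdot 3\equiv 1\pmod 5$, and the two residual contributions $+1$ and $-1$ cancel in pairs (with no fixed points, since $d=N/d$ would force $N\equiv 4\pmod 5$).

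There is essentially no computational obstacle here; the only point requiring care is that all reindexings and reductions take place modulo $5$, which is permitted precisely because $N$, hence every divisor of $N$, is prime to $5$. The ``trap'' worth flagging is that this corollary does not in fact fall out of Theorem \ref{MO51} alone: reducing \eqref{MO251} modulo $5$ only recovers $5\mid c(5n+1)$ (the last line of that proof) and says nothing about $\sigma_3$. To extract $\sigma_3(5n+1)\equiv\sigma_1(5n+1)\pmod 5$ from \eqref{MO251} one would instead need the divisor convolution $c(5n+1)=\sum_{j}\sigma_1(j)\sigma_1(5n+1-j)$ modulo $25$, which is strictly more information; the elementary divisor‑pairing route avoids this entirely.
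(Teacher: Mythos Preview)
Your proof is correct and takes a genuinely different route from the paper's. The paper derives the congruence as a \emph{consequence} of its main combinatorial results: since $V_2(q)-U_2(q)=\tfrac16\sum_{j}(\sigma_3(j)-\sigma_1(j))q^j$ (Proposition~\ref{V2U2}) and both $M(2,5n+1)$ and $MO(2,5n+1)$ vanish modulo $5$ (Theorem~\ref{cong1}(iii) and Theorem~\ref{MO51}), the coefficient of $q^{5n+1}$ in $V_2-U_2$ is divisible by $5$, and inverting $6$ modulo $5$ finishes. Your route bypasses all of that machinery via divisor pairing and Fermat's little theorem; in fact your argument is exactly the content of the paper's own Lemma~\ref{BeforeMO252}(a) (take $p=5$, $k=3$, $j=1$, $a=1$, $b=-1$), which appears immediately after this corollary and is used for the later $MO$-congruences but not here. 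So the trade-off is clear: the paper's route exhibits the corollary as a byproduct of the generalized-divisor-sum congruences, underscoring that those combinatorial objects genuinely encode this arithmetic; your route shows the statement is elementary in isolation and places it squarely within the family covered by Lemma~\ref{BeforeMO252}.

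One correction to your closing commentary: the paper does \emph{not} need \eqref{MO251} modulo $25$ for this corollary, and that is not the intended path. Equation~\eqref{MO251} is used only to establish $5\mid MO(2,5n+1)$; once that and $5\mid M(2,5n+1)$ are known, Proposition~\ref{V2U2} delivers the corollary working strictly modulo $5$. Your suspicion that Theorem~\ref{MO51} alone is insufficient is right, but the missing ingredient is Theorem~\ref{cong1}(iii) together with Proposition~\ref{V2U2}, not a mod-$25$ refinement.
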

\begin{proof} By Proposition \ref{V2U2}, there holds $V_2(q)-U_2(q)=\frac16\sum_{j\geq 0}\sigma_3(j)q^j-\frac16\sum_{j\geq 0}\sigma_1(j)q^j$. Theorem \ref{cong1} (iii) and Theorem \ref{MO51} confirm $M(2,5n+1)\equiv MO(2,5n+1)\equiv0\pmod5$. So, the coefficients of $q^{5n+1}$ in $V_2(q)-U_2(q)$ becomes divisible by $5$. The proof follows.
\end{proof}

\begin{lemma}\label{BeforeMO252} (a) Let $p$ be a prime and let $n\not\equiv 0 \pmod{p}$.
\noindent If $a,b,k,j\in\mathbb{Z}$ are such that $k+j\equiv 0 \pmod{p-1}$ and $a+bn^j\equiv 0 \pmod{p}$ then
$$a\sigma_k(n)+b\sigma_j(n)\equiv 0 \pmod{p}.$$
(b) If $p\neq 2$ and $n$ not a quadratic residue modulo $p$ then $\sigma_{\frac{p-1}{2}}(n)\equiv 0 \pmod{p}$.
\end{lemma}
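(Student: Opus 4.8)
The plan is to prove part (a) by the classical divisor-swapping trick and then to extract part (b) as an immediate special case. For (a), first I would note that since $n\not\equiv 0\pmod p$, every divisor $d$ of $n$ is coprime to $p$, hence a unit modulo $p$, so Fermat's little theorem gives $d^{p-1}\equiv 1\pmod p$. Because $p-1\mid k+j$, this yields $d^{k+j}\equiv 1$, i.e. $d^{k}\equiv d^{-j}\pmod p$ in $(\mathbb{Z}/p\mathbb{Z})^{\times}$. Summing over all $d\mid n$ gives $\sigma_k(n)\equiv\sum_{d\mid n}d^{-j}\pmod p$.

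Next I would reindex the right-hand sum by the involution $d\mapsto n/d$ of the divisor set of $n$. Since $(n/d)^{-j}=n^{-j}d^{j}$ and $n$ is invertible modulo $p$, we obtain $\sum_{d\mid n}d^{-j}=n^{-j}\sum_{d\mid n}d^{j}=n^{-j}\sigma_j(n)\pmod p$, so that $\sigma_k(n)\equiv n^{-j}\sigma_j(n)\pmod p$. Consequently
$$a\,\sigma_k(n)+b\,\sigma_j(n)\equiv\bigl(a\,n^{-j}+b\bigr)\sigma_j(n)\equiv n^{-j}\bigl(a+b\,n^{j}\bigr)\sigma_j(n)\pmod p,$$
which vanishes by the hypothesis $a+b\,n^{j}\equiv 0\pmod p$. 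This establishes (a).

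For (b), I would apply (a) with $k=j=\tfrac{p-1}{2}$ and $a=b=1$. The divisibility $k+j=p-1\equiv 0\pmod{p-1}$ holds automatically, and Euler's criterion gives $n^{(p-1)/2}\equiv -1\pmod p$ because $n$ is a quadratic non-residue (in particular $n\not\equiv 0\pmod p$, so the hypothesis of (a) is met), whence $a+b\,n^{j}=1+n^{(p-1)/2}\equiv 0\pmod p$. Part (a) then yields $2\,\sigma_{(p-1)/2}(n)\equiv 0\pmod p$, and since $p\neq 2$ we cancel the factor $2$ to conclude $\sigma_{(p-1)/2}(n)\equiv 0\pmod p$.

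There is no serious obstacle here; the argument is elementary. The only points needing a word of care are that $d\mapsto n/d$ is a genuine bijection of the divisor set (immediate), that the negative exponents are legitimate modulo $p$ (guaranteed by $n\not\equiv 0\pmod p$, which forces each divisor to be a unit), and the harmless appeal to $p\neq 2$ in (b) to invert $2$.
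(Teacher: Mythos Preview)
Your proof is correct and follows essentially the same route as the paper: both arguments use Fermat's little theorem on each divisor (licensed by $p\nmid n$) together with $p-1\mid k+j$ to reduce $d^{k}$ to $d^{-j}$, then the divisor involution $d\mapsto n/d$ to pull out the factor $n^{j}$, and finally Euler's criterion to specialize to part (b). The only cosmetic difference is that the paper combines the two divisor sums into a single sum $\sum_{d\mid n}(ad^{k+j}+bn^{j})/d^{j}$ before applying Fermat, whereas you first isolate the relation $\sigma_k(n)\equiv n^{-j}\sigma_j(n)\pmod p$ and then substitute.
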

\begin{proof} By the very definition of the power-sum divisors,
$$a\sigma_k(n)+b\sigma_j(n)=a\sum_{d\vert n}d^k+b\sum_{d\vert n}\left(\frac{n}d\right)^j
=\sum_{d\vert n}\frac{ad^{k+j}+bn^j}{d^j}.$$
The assumption $n\not\equiv 0\pmod p$ and Fermat's Little theorem, $d^{p-1}\equiv1 \pmod{p}$. Hence
$$a\sigma_k(n)+b\sigma_j(n)\equiv\sum_{d\vert n}d^k(a+bn^j)\equiv 0\pmod{p}.$$
If $n$ is not a quadratic residue modulo $p$ then (by Euler's criterion) $n^{\frac{p-1}{2}}\equiv -1\pmod{p}$, and after letting $k=j=\frac{p-1}{2}$ and $a=b=1$, we find
$$2\sigma_{\frac{p-1}{2}}(n)\equiv\sum_{d\vert n}d^k(1+n^{\frac{p-1}{2}})\equiv 0\pmod{p}$$
concludes the proof for part (b) of the assertion.
\end{proof}

%\smallskip
\begin{theorem} If $t=2$ then $5\,\vert \, MO(t,5n+2)$.
\end{theorem}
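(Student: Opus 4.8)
The plan is to follow the template of the proof of Theorem~\ref{MO51}: first collapse $MO(2,n)$ into a closed expression involving only $\sigma_1$ and $\sigma_3$, and then read off the claimed divisibility from Lemma~\ref{BeforeMO252}(a). The new feature compared with the $5n+1$ case is that here \emph{neither} of the two natural pieces of $MO(2,n)$ is individually divisible by $5$; only their combination is, so we cannot argue summand-by-summand and must actually simplify.

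Concretely, recall from Lemma~\ref{ele-hom} (as already used in the proof of Theorem~\ref{MO51}) that $U_2(q)=V_1(q)^2-V_2(q)$; writing $\sum_{n\ge0}c(n)q^n=V_1(q)^2$, i.e.\ $c(n)=\sum_{j=1}^{n-1}\sigma_1(j)\sigma_1(n-j)$, this gives $MO(2,n)=c(n)-M(2,n)$. Next I would use the relation $10\,V_2(q)=(7V_1(q)-1)V_1(q)+q\tfrac{d}{dq}V_1(q)$ from Example~\ref{useful_ex}, whose $q^n$-coefficient reads $10\,M(2,n)=7c(n)+(n-1)\sigma_1(n)$, so that $10\,MO(2,n)=3c(n)-(n-1)\sigma_1(n)$. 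Finally, eliminating $c(n)$ with the help of Ramanujan's identity \eqref{MO251}, namely $12c(n)=5\sigma_3(n)+(1-6n)\sigma_1(n)$, produces the compact closed form
\[
  8\,MO(2,n)=\sigma_3(n)+(1-2n)\sigma_1(n).
\]

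With this formula in hand the proof is immediate. Since $\gcd(8,5)=1$, showing $5\mid MO(2,5n+2)$ is the same as showing $5\mid\sigma_3(5n+2)+(1-2(5n+2))\sigma_1(5n+2)$, and because $1-2(5n+2)\equiv 2\pmod 5$ this reduces to $\sigma_3(5n+2)+2\sigma_1(5n+2)\equiv 0\pmod 5$. That last congruence is exactly an instance of Lemma~\ref{BeforeMO252}(a): take $p=5$, apply it to the integer $5n+2\not\equiv 0\pmod 5$ with $(k,j)=(3,1)$ (so $k+j=4\equiv 0\pmod{p-1}$) and $(a,b)=(1,2)$; then $a+b(5n+2)\equiv 1+2\cdot 2\equiv 0\pmod 5$, and the lemma yields $\sigma_3(5n+2)+2\sigma_1(5n+2)\equiv 0\pmod 5$, which finishes the argument.

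I expect no genuine obstacle. The only mildly technical point is the bookkeeping in the elimination step that turns the three input relations into the single identity $8\,MO(2,n)=\sigma_3(n)+(1-2n)\sigma_1(n)$; a reader who prefers can instead carry the congruences of $c(n)$ and $M(2,n)$ modulo $5$ separately, the point being that $c(5n+2)\equiv M(2,5n+2)\pmod 5$ even though neither quantity is $\equiv 0$. The true arithmetic content is the single fact $\sigma_3(5n+2)\equiv -2\sigma_1(5n+2)\pmod 5$, handed to us by Lemma~\ref{BeforeMO252}(a).
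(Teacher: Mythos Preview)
Your proposal is correct and follows essentially the same route as the paper: both arguments first express $MO(2,n)$ as a closed form in $\sigma_1$ and $\sigma_3$ by combining the relation $U_2=V_1^2-V_2$ with Ramanujan's identity \eqref{MO251}, arriving at the same identity $8\,MO(2,n)=\sigma_3(n)+(1-2n)\sigma_1(n)$, and then finish via Lemma~\ref{BeforeMO252}(a). The only cosmetic differences are that you reach the closed form through the $10V_2$ relation of Example~\ref{useful_ex} rather than Proposition~\ref{V2U2}, and you invoke Lemma~\ref{BeforeMO252}(a) with $(k,j,a,b)=(3,1,1,2)$ while the paper uses the equivalent choice $(1,3,1,-2)$.
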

\begin{proof} From Lemma \ref{ele-hom}, Proposition \ref{V2U2}, and equation \eqref{MO251} we infer the system 
$$\begin{cases}
\!\!\!\!&V_2+U_2=V_1^2 \\ 
\!\!\!\!&V_2-U_2=\frac16\sum_{n\geq 0}(\sigma_3(n)-\sigma_1(n))q^n  \\
\!\!\!\!&\qquad \,V_1^2=\frac1{12}\sum_{n\geq 0}(5\sigma_3(n)-(1-6n)\sigma_1(n))q^n. 
\end{cases}$$
Solving for $U_2(q)=\sum M(2,n)q^n$ and simplifying the result expressions leads to 
\begin{align*} 2M(2,5n+2)&=\frac14\sigma_3(5n+2)+\frac14\sigma_1(5n+2)-\frac12(5n+2)\cdot\sigma_1(5n+2) \\
&\equiv -\sigma_3(5n+2)+3\sigma_1(5n+2) \equiv 3(\sigma_1(5n+2)-2\sigma_3(5n+2)) \pmod5.
\end{align*}
Choose $p=5, k=1, j=3, a=1, b=-2$ and $n\rightarrow 5n+2\not\equiv0\pmod 5$. Then, we observe that Lemma \ref{BeforeMO252}(a) forces the last expression to vanishes modulo $5$.
\end{proof}

\begin{theorem} If $t=3$ then $7\,\vert \, MO(t,7n+3)$ and $7\,\vert \, MO(t,7n+5)$.
\end{theorem}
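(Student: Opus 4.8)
The plan is to mimic Theorems \ref{MO51} and \ref{M84}. First I would assemble a closed formula for the generating function of $MO(3,n)$ purely in terms of the divisor power-sums: adding equations \eqref{oneV3} and \eqref{oneU3V3} of Lemma \ref{before_M84} gives
$$\sum_{n\ge 0}MO(3,n)\,q^n=U_3(q)=\frac1{1920}\sum_{n\ge 0}\bigl((40n^2-100n+37)\sigma_1(n)+(50-30n)\sigma_3(n)+3\sigma_5(n)\bigr)q^n.$$
(Equivalently one could quote MacMahon's own tabulation of $U_3$ from \cite[p.\ 104]{MacMahon}, exactly as \eqref{oneU4} was used for $U_4$.) Since $1920\equiv 2\pmod 7$ and $2^{-1}\equiv 4\pmod 7$, this reduces to
$$MO(3,n)\equiv 4\bigl((5n^2-2n+2)\sigma_1(n)+(1-2n)\sigma_3(n)+3\sigma_5(n)\bigr)\pmod 7.$$

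Next I would specialize $n$ to the classes $7m+3$ and $7m+5$. The key observation is that $3$ and $5$ are quadratic non-residues modulo $7$ (the residues are $1,2,4$), so Lemma \ref{BeforeMO252}(b) applied with $p=7$ and $\tfrac{p-1}2=3$ kills the entire $\sigma_3$-term: $\sigma_3(7m+3)\equiv\sigma_3(7m+5)\equiv 0\pmod 7$. What remains is a $\mathbb{Z}/7$-linear combination of $\sigma_1(n)$ and $\sigma_5(n)$ (namely, $4[6\sigma_1+3\sigma_5]\equiv 3\sigma_1+5\sigma_5$ on the class $n\equiv 3$, and $4[5\sigma_1+3\sigma_5]\equiv 6\sigma_1+5\sigma_5$ on the class $n\equiv 5$). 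For these I would invoke Lemma \ref{BeforeMO252}(a) with $p=7$, $k=5$, $j=1$ (legitimate because $k+j=6\equiv 0\pmod{p-1}$): one checks that up to an invertible scalar the surviving combination is $\sigma_5(n)+2\sigma_1(n)$ on $n\equiv 3$, where $1+2\cdot 3\equiv 0\pmod 7$, and $\sigma_5(n)+4\sigma_1(n)$ on $n\equiv 5$, where $1+4\cdot 5\equiv 0\pmod 7$; hence both vanish modulo $7$. Putting the two reductions together yields $7\mid MO(3,7n+3)$ and $7\mid MO(3,7n+5)$.

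I do not expect a genuine obstacle here: the argument is the same ``plug the $\sigma$-representation into a residue class and peel off terms with Lemma \ref{BeforeMO252}'' strategy used twice already, and the vanishing $MO(3,n)=0$ for $n<6$ makes the small cases automatic. The only thing needing care is bookkeeping — correctly combining \eqref{oneV3} and \eqref{oneU3V3}, reducing $1920$ and the quadratic coefficients modulo $7$, and matching the residual $\sigma_1,\sigma_5$ combination to an admissible pair $(a,b)$ in Lemma \ref{BeforeMO252}(a). If one prefers to avoid even that matching, one can instead use the reflection identity $\sigma_5(n)\equiv n^5\sigma_1(n)\pmod 7$ for $\gcd(n,7)=1$ (itself a one-line consequence of Lemma \ref{BeforeMO252}(a)) to rewrite the surviving expression as a single polynomial multiple of $\sigma_1(n)$ and verify that this polynomial is $\equiv 0\pmod 7$ on the classes $n\equiv 3$ and $n\equiv 5$.
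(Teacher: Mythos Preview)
Your proposal is correct and follows essentially the same route as the paper: you derive the same divisor-sum representation of $U_3(q)$ by adding \eqref{oneV3} and \eqref{oneU3V3}, kill the $\sigma_3$ term via Lemma~\ref{BeforeMO252}(b) using that $3$ and $5$ are non-residues mod~$7$, and handle the remaining $\sigma_1,\sigma_5$ combination with Lemma~\ref{BeforeMO252}(a). The only cosmetic difference is that the paper applies Lemma~\ref{BeforeMO252}(a) with $(k,j)=(1,5)$ rather than your $(k,j)=(5,1)$, which is immaterial since the condition $k+j\equiv 0\pmod{p-1}$ is symmetric.
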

\begin{proof} Relying on equations \eqref{oneV3} and \eqref{oneU3V3} of Lemma \ref{before_M84}, we gather that
$$U_3(q)=\frac{1}{1920}\sum_{n\geq 0}\left((40n^2-100n+37)\sigma_1(n)
-(30n-50)\sigma_3(n)+3\sigma_5(n)\right)q^n.$$
Therefore, since $U_3(q)=\sum_{n\geq 0} MO(3,n)q^n$, we are lead to
\begin{align*}
MO(3,7n+3)&\equiv 3\sigma_1(7n+3)+\sigma_3(7n+3)-2\sigma_5(7n+3)\pmod{7}, \\
MO(3,7n+5)&\equiv -\sigma_1(7n+5)-\sigma_3(7n+5)+5\sigma_5(7n+5)\pmod{7}.
\end{align*}
Choosing the values $p=7, k=1, j= 5, a=3, b=-2$ and $n\rightarrow 7n+3\not\equiv 0\pmod7$, apply Lemma \ref{BeforeMO252}(a) to obtain $3\sigma_1(7n+3)-2\sigma_5(7n+3)\equiv 0\pmod 7$. Again, invoke Lemma \ref{BeforeMO252}(a) with
 $p=7, k=1, j= 5, a=-1, b=5, n\rightarrow 7n+5\not\equiv 0\pmod7$ to infer another congruence $\sigma_1(7n+5)-(7n+5)\sigma_5(7n+5)\equiv 0\pmod 7$.

\smallskip
\noindent
Since neither of the numbers $7n+3$ and $7n+5$ is a quadratic residue modulo $7$, we may readily benefit from Lemma \ref{BeforeMO252}(b): 
$\sigma_3(7n+3)\equiv \sigma_3(7n+5)\equiv 0 \pmod{7}$. Indeed, we have witnessed enough reliable verity to reach the desired conclusion.
\end{proof}

\smallskip
\noindent
\begin{theorem} \label{MO4116} If $t=4$ then $11\,\vert \, MO(t,11n+6)$.
\end{theorem}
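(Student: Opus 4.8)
The plan is to follow the template used in Theorems \ref{cong1}--\ref{M84}: take the explicit closed form for $U_4(q)=\sum_{n\geq0}MO(4,n)\,q^n$ recorded in equation \eqref{oneU4} of Lemma \ref{before_M84}, reduce its coefficients modulo $11$, specialise to the progression $n=11m+6$, and then annihilate the surviving power-sum-of-divisors terms with Lemma \ref{BeforeMO252}. Write $c_1(n)=-840n^3+5880n^2-9870n+3229$, $c_3(n)=756n^2-4410n+4935$, $c_5(n)=-126n+441$ for the three polynomial coefficients in \eqref{oneU4}.

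First I would dispose of the overall constant. Since $11\nmid 967680$ and in fact $967680\equiv-1\pmod{11}$, equation \eqref{oneU4} gives, modulo $11$,
\begin{equation*}
MO(4,n)\equiv -\big(c_1(n)\,\sigma_1(n)+c_3(n)\,\sigma_3(n)+c_5(n)\,\sigma_5(n)+5\,\sigma_7(n)\big)\pmod{11}.
\end{equation*}
Next I would evaluate the polynomials at $n\equiv 6\pmod{11}$. A short computation (using $c_1(n)\equiv 7n^3+6n^2+8n+6$, $c_3(n)\equiv 8n^2+n+7$, $c_5(n)\equiv 6n+1 \pmod{11}$ and $6^2\equiv3$, $6^3\equiv7$) yields $c_1(6)\equiv 0$, $c_3(6)\equiv 4$, and $c_5(6)\equiv 4$ modulo $11$; the key point is that $c_1(6)\equiv0$, which removes $\sigma_1$ from the discussion entirely (a lone $\sigma_1$ term would have no partner $\sigma_9$ to cancel against). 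Hence for all $m\geq 0$,
\begin{equation*}
MO(4,11m+6)\equiv -\big(4\,\sigma_3(11m+6)+4\,\sigma_5(11m+6)+5\,\sigma_7(11m+6)\big)\pmod{11}.
\end{equation*}

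The decisive step is to show this vanishes. Put $N=11m+6$, so $N\not\equiv 0\pmod{11}$. Since $6$ is a quadratic non-residue modulo $11$ (the residues being $\{1,3,4,5,9\}$), Lemma \ref{BeforeMO252}(b) with $p=11$ gives $\sigma_5(N)=\sigma_{\frac{11-1}{2}}(N)\equiv 0\pmod{11}$, killing the $\sigma_5$ term. For the remaining two I would invoke Lemma \ref{BeforeMO252}(a) with $p=11$, $k=3$, $j=7$, $a=4$, $b=5$: one has $k+j=10\equiv 0\pmod{p-1}$, and since $6^7\equiv 8\pmod{11}$ the hypothesis $a+bN^j\equiv 4+5\cdot 8=44\equiv 0\pmod{11}$ holds, whence $4\,\sigma_3(N)+5\,\sigma_7(N)\equiv 0\pmod{11}$. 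Combining the two, $MO(4,11m+6)\equiv 0\pmod{11}$.

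There is no real conceptual obstacle — the argument is a finite mechanical check — so the only things to watch are: (i) that equation \eqref{oneU4}, being MacMahon's (typo-corrected) expansion of $U_4(q)$, is transcribed correctly, since the whole proof rests on those four coefficients; and (ii) carrying out the modulo-$11$ reductions of the large integers $967680,\,3229,\,4935,\dots$ and of $6^7$ without arithmetic slips. If one wishes to avoid relying on MacMahon's formula, the same coefficients can be regenerated from the recurrence $U_t(q)=\frac{1}{2t(2t+1)}\big[(6U_1(q)+t(t-1))U_{t-1}(q)-2DU_{t-1}(q)\big]$ of \cite{Andrews-Rose} together with the Eisenstein-series dictionary used in Lemma \ref{before_M84}, giving an independent check.
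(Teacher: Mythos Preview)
Your proof is correct and follows essentially the same route as the paper's: both start from \eqref{oneU4}, reduce the coefficients at $n\equiv 6\pmod{11}$ (your $-(4\sigma_3+4\sigma_5+5\sigma_7)$ is exactly the paper's $7\sigma_3+7\sigma_5+6\sigma_7$ modulo $11$), kill $\sigma_5$ via Lemma~\ref{BeforeMO252}(b), and pair $\sigma_3$ with $\sigma_7$ via Lemma~\ref{BeforeMO252}(a). Your write-up is in fact a bit more transparent about the arithmetic, explicitly noting $967680\equiv-1$ and why the vanishing of the $\sigma_1$-coefficient is essential.
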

\begin{proof} For the present task, we revive a tool from Lemma \ref{before_M84} by way of identity \eqref{oneU4}:
\begin{align*}
U_4(q)&=\frac{1}{967680}\sum_{n\geq 0}\big((-840n^3+5880n^2-9870n+3229)\sigma_1(n) \\
&\quad \quad \quad +(756n^2-4410n+4935)\sigma_3(n)+(-126n+441)\sigma_5(n)+5\sigma_7(n)\big)q^n.
\end{align*}
Then, since $U_4(q)=\sum_{n\geq 0} MO(4,n)q^n$, we derive the congruence
\begin{align*}
MO(4,11n+6)&\equiv 7\sigma_3(11n+6)+7\sigma_5(11n+6))+6\sigma_7(11n+6)\pmod{11}.
\end{align*}
Because $11n+6$ is not a quadratic residue modulo $11$, once more Lemma \ref{BeforeMO252}(b) shows that $\sigma_5(11n+6)\equiv 0 \pmod{11}$. 

\smallskip
\noindent
In the next step, assume $p=11, k=3, j= 7, a=7, b=6, n\rightarrow 11n+6\not\equiv 0\pmod{11}$. Then, Lemma \ref{BeforeMO252}(a) can be invoked
to yield $7\sigma_3(11n+6)+6\sigma_7(11n+6)\equiv 0\pmod{11}$.
Combining the above congruences is sufficient to reach $MO(4,11n+6)\equiv0\pmod{11}$.
\end{proof}

\section{Conclusion}

\noindent
We believe that we have just scratched the surface of these MacMahon divisor sums and their extensions. In particular identity \eqref{partofcor3.1} is surely not the only identity of this nature leading to divisor sum identities. To be clear, the appropriate domain we are considering consists of multiple $q$-series wherein the numerator is of the form $q^{L(k_1,k_2,\dots,k_t)}$ with $L(k_1,k_2,\dots,k_t)$ a linear function of the $t$ indices of the $k$-fold series and each denominator is of the form
$$\prod_{j=1}^t(1-q^{a_jk_j})^{b_j}$$
with the $a_j$'s and $b_j$'s positive integers. 

\smallskip
\noindent
With each such series there is a ``conjugate" series equal to it. Our meaning of ``conjugate" is best illustrated by an example related to 
equation \eqref{partofcor3.1}:
\begin{align*}
\sum_{1\leq k_1<k_2<\cdots<k_{2t-1}}\frac{k_1\, q^{k_1+k_3+\cdots+k_{2t-1}}}{\prod_{j=1}^t(1-q^{k_j})}
&=\sum_{\substack{1\leq k_1\leq k_2\leq\cdots\leq k_{2t-1} \\ m_1, m_3, \dots,m_{2t-1}\geq1 \\ m_2, m_4, \dots, m_{2t-2}\geq0}}
k_1\,q^{m_1k_1+m_2k_2+\cdots+m_{2t-1}k_{2t-1}} \\
&=\sum_{M_1>M_2\geq M_3>M_4\geq\cdots\geq M_{2t-1}\geq1}
\frac{q^{M_1}}{(1-q^{M_1})\prod_{j=1}^{2t-1}(1-q^{M_j})}.
\end{align*}
The passage to the final expression is done by summing all the $k_j$ series, and then setting $M_i=m_i+m_{i+1}+\cdots+m_{2t-1}$.

\smallskip
\noindent
Clearly the above procedure can be applied to any series in this domain. The most important point to note is that the identity involving the two series in
equation \eqref{partofcor3.1} is not a ``conjugate" identity.

\smallskip
\noindent
Finally we should add a combinatorial comment. Let us return to the multi-sum expression \eqref{Mac1} in the case $t=2$. We are considering partitions into exactly two distinct parts each possibly appearing several times. For example,
$$5+5+3+3+3+3=2\cdot5+4\cdot3=2+2+2+2+2+3+3+3+3.$$
This seeming conjugation map is flawed because it is not an involution
$$4+4+2+2=2\cdot4+2\cdot2=2+2+2+2+2+2$$
while
$$5+5+1+1=2\cdot5+2\cdot1=2+2+2+2+2+2.$$
On the other hand, classic partition conjugation does preserve the feature of $t$-different parts. Thus the partition $5+5+3+3+3+3$ and its conjugate $6+6+6+2+2$ retain their respective Young diagrams
$$\young(~~~~~,~~~~~,~~~,~~~,~~~,~~~) \qquad \qquad \text{and}  \qquad \qquad  \young(~~~~~~,~~~~~~,~~~~~~,~~,~~)$$
The series ``conjugate" referred to in the example above actually corresponds to the faulty conjugate map and not the classic conjugate map.

\smallskip
\noindent
In summary, as we observed earlier, we only saw the tip of the iceberg. We look ahead to many more arithmetic and combinatorial discoveries.

\smallskip
\noindent
In the preceding section we succeeded in proving several congruences both for MacMahon's generalized divisor sums $U_t(q)$ as well as for our own $V_t(q)$. We wish to close the discussion by inviting the stimulated reader one tantalizing congruence which might benefit from a different (fresher) breed of techniques.

\begin{conjecture} If $t=10$ then $11\,\vert \, MO(t,11n+7)$.
\end{conjecture}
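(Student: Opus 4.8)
The plan is to run the same machine that yielded Theorem \ref{MO51} and Theorem \ref{MO4116}. The first task is to put $U_{10}(q)=\sum_{n\ge0}MO(10,n)q^n$ into the shape $\sum_{n\ge0}P(n)q^n$ with $P(n)=\sum_{s\in\{1,3,\dots,19\}}p_s(n)\,\sigma_s(n)$ and $p_s\in\mathbb{Q}[n]$ of degree less than $11$. One obtains this by iterating the Andrews--Rose recurrence
\[
U_t(q)=\frac{1}{2t(2t+1)}\Big[(6U_1(q)+t(t-1))U_{t-1}(q)-2DU_{t-1}(q)\Big],\qquad D=q\tfrac{d}{dq},
\]
from \cite{Andrews-Rose}, starting at $U_1(q)=\sum_{n\ge1}\sigma_1(n)q^n$ and carrying each $U_t$ inside $\mathbb{Q}[E_2,E_4,E_6]$ by means of Ramanujan's relations $DE_2=\frac{E_2^2-E_4}{12}$, $DE_4=\frac{E_2E_4-E_6}{3}$, $DE_6=\frac{E_2E_6-E_4^2}{2}$; a final pass through the Eisenstein dictionary and the Ramanujan ``Table IV'' convolution identities (of which \eqref{MO251} is the $t=2$ instance) returns everything to the $\sigma$-basis. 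Equivalently, one may read $U_{10}$ straight off MacMahon's umbral formula $2^{20}\,21!\,U_{10}(q)=\mathbf{J}_1^{-1}\,\mathbf{J}(\mathbf{J}^2-1^2)(\mathbf{J}^2-3^2)\cdots(\mathbf{J}^2-19^2)$ and re-expand. This step is long but purely mechanical, on the model of \eqref{oneU4} in Lemma \ref{before_M84}.

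Next I would reduce modulo $11$. For $11\nmid m$ Fermat's little theorem gives $d^{10}\equiv1\pmod{11}$ for every $d\mid m$, hence $\sigma_s(m)\equiv\sigma_{s'}(m)\pmod{11}$ whenever $s\equiv s'\pmod{10}$. At the argument $m=11n+7$ this folds $\sigma_{11},\dots,\sigma_{19}$ down onto $\sigma_1,\dots,\sigma_9$, so
\[
MO(10,11n+7)\equiv\sum_{s\in\{1,3,5,7,9\}}c_s(n)\,\sigma_s(11n+7)\pmod{11},\qquad c_s\in\mathbb{F}_{11}[n].
\]
Since $7$ is a quadratic non-residue modulo $11$, Lemma \ref{BeforeMO252}(b) gives $\sigma_5(11n+7)\equiv0\pmod{11}$, removing the $s=5$ term. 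The pairs $(k,j)=(9,1)$ and $(k,j)=(7,3)$ have $k+j\equiv0\pmod{10}$, and $11n+7\equiv7$ is invertible modulo $11$, so Lemma \ref{BeforeMO252}(a) (solving $a+b\cdot7^{j}\equiv0$, using $7\equiv7$ and $7^{3}\equiv2$) yields $\sigma_9(11n+7)\equiv8\,\sigma_1(11n+7)$ and $\sigma_7(11n+7)\equiv6\,\sigma_3(11n+7)\pmod{11}$. Substituting collapses the congruence to
\[
MO(10,11n+7)\equiv A(n)\,\sigma_1(11n+7)+B(n)\,\sigma_3(11n+7)\pmod{11}
\]
for explicit $A,B\in\mathbb{F}_{11}[n]$ extracted from the $c_s$.

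The conjecture then reduces to the assertion $A\equiv B\equiv0$ in $\mathbb{F}_{11}[n]$, a finite check once $U_{10}$ is written out. I expect this to be the genuine obstacle: it is a ``numerical accident'' dictated by the fact that $U_{10}$ has weight $2t=20=2(11-1)$, so its reduction modulo $11$ sits at the edge of the weight filtration, and nothing in the elementary toolkit of Lemma \ref{BeforeMO252} explains in advance why the $\sigma_1$- and $\sigma_3$-parts should both vanish. If the raw computation does not confirm $A\equiv B\equiv0$, the natural fallback is to argue inside the ring of quasi-modular forms modulo $11$: the congruence $E_{10}\equiv1\pmod{11}$ (so that weight $20$ collapses to weight $0$ in the filtration), combined with the $\theta$-operator and, if necessary, Hecke or Atkin $U_{11}$-operator arguments applied to the weight-$20$ Eisenstein combination representing $U_{10}$, should force the vanishing uniformly in $n$ without ever touching the polynomials $A(n),B(n)$ one coefficient at a time. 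This modular-forms-mod-$p$ viewpoint is, I believe, the ``fresher breed of techniques'' the authors have in mind, and the decisive new ingredient beyond Theorem \ref{MO51} and Theorem \ref{MO4116} is the control of the low-weight part of a weight-$20$ quasi-modular form modulo $11$.
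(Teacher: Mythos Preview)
The paper does not prove this statement: it is explicitly labelled a \emph{Conjecture} and offered ``by inviting the stimulated reader'' as something that ``might benefit from a different (fresher) breed of techniques.'' There is therefore no proof in the paper for your proposal to be compared against.

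As to your proposal itself, it is a coherent plan rather than a proof, and you essentially say so yourself. The reductions you carry out are correct (in particular $7$ is a non-residue mod $11$, and your folding via Lemma~\ref{BeforeMO252} gives $\sigma_9\equiv 8\sigma_1$ and $\sigma_7\equiv 6\sigma_3$ at the argument $11n+7$), and they legitimately reduce the conjecture to the vanishing of two explicit polynomials $A,B\in\mathbb{F}_{11}[n]$ once $U_{10}$ has been written in the $\sigma$-basis. But you have not computed $U_{10}$, and you have not verified that $A$ and $B$ actually vanish; you only assert that this is ``a finite check'' and then hedge with ``if the raw computation does not confirm $A\equiv B\equiv 0$.'' That is precisely the content of the conjecture, so the proposal stops short of a proof. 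Your fallback via the mod-$11$ theory of quasi-modular forms is a reasonable heuristic for \emph{why} one might expect vanishing, but as written it is a sketch of a strategy, not an argument, and none of the specific claims (about the $\theta$-operator, the weight filtration, or $U_{11}$) are substantiated. In short: the outline is sound and matches the spirit of the paper's Section~8, but the decisive step is left undone, which is exactly why the authors recorded this as a conjecture rather than a theorem.
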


\bigskip
\noindent
{\bf Acknowledgments.} The first author appreciates Ken Ono and Olivia Beckwith for some useful discussions.

\medskip

\end{document}